\theoremstyle{plain}
\newtheorem{theorem}{Theorem}[section]
\newtheorem{proposition}{Proposition}
\newtheorem{corollary}{Corollary}[section]
\newtheorem{lemma}{Lemma}[section]
\newtheorem{example}{Example}[section]
\theoremstyle{definition}
\newtheorem{definition}{Definition}[section]
\theoremstyle{remark}
\newtheorem{remark}{Remark}[section]
\newtheorem*{acknowledgment}{Acknowledgment}
\title{Nefness of the direct images of relative canonical bundles}
\author{Jingcao Wu\\ \emph{School of Mathematical Sciences, Fudan University}}
\begin{document}

\begin{abstract}
Given a fibration $f$ between two projective manifolds $X$ and $Y$, we discuss the nefness of the direct images $f_{\ast}(K_{X/Y}\otimes L)$, where $(L,h)$ is a pseudo-effective line bundle with mild singularity.
\end{abstract}
\maketitle
\footnotetext{2010 Mathematics Subject Classification. Primary 32J25; Secondary 32L05.}
\pagestyle{plain}

\section{Introduction}
Assume that $f:X\rightarrow Y$ is a surjective fibration, i.e. a morphism with connected fibres, between two projective manifolds $X$ and $Y$. $L$ is a line bundle on $X$. The positivity of the associated direct image $f_{\ast}(K_{X/Y}\otimes L)$ is of much importance to understand the geometry of this fibration. There are fruitful results in this subject, such as \cite{Ber08,Ber09,Hor10,Kaw81,Kaw82,Ko86a,Ko86b,Ko87,Vie82,Vie83}. It turns out that the positivity of $f_{\ast}(K_{X/Y}\otimes L)$ is deeply influenced by $L$.

In this paper, we focus on a general pseudo-effective line bundle $(L,h)$. First, we prove the following Koll\'{a}r-type vanishing theorem.
\begin{theorem}\label{t1}
Let $f:X\rightarrow Y$ be a surjective fibration between projective manifolds $X$ and $Y$. Let $(L,h)$ be a pseudo-effective line bundle over $X$. If $A$ is an ample line bundle over $Y$, then for any $i>0$ and $q\geqslant0$,
\begin{equation*}
    H^{i}(Y,R^{q}f_{\ast}(K_{X}\otimes L\otimes\mathscr{I}(h))\otimes A)=0.
\end{equation*}
Here and in the rest of this paper, $\mathscr{I}(h)$ refers to the multiplier ideal sheaf associated to $h$.
\end{theorem}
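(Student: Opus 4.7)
My approach is to reduce Theorem \ref{t1} to a classical Koll\'ar-type vanishing on a log resolution by means of an analytic regularization of the singular metric. The three ingredients are: Demailly's approximation of $h$ by metrics with analytic singularities, a log resolution of the multiplier ideal, and a Koll\'ar-type vanishing on the resolution where $L$ becomes a nef line bundle with a smooth metric.

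\textbf{Step 1 (approximation).} First I would apply Demailly's regularization theorem to replace $h$ by a decreasing sequence of singular metrics $h_{m}$ on $L$ with analytic singularities and curvature $i\Theta_{h_{m}}(L)\geqslant -\epsilon_{m}\omega$ for a fixed K\"ahler form $\omega$ on $X$ and $\epsilon_{m}\downarrow 0$. By the strong openness theorem of Guan--Zhou, one has $\mathscr{I}(h_{m})=\mathscr{I}(h)$ for $m$ large enough (after a small rescaling), so it is enough to establish the desired vanishing with $h$ replaced by $h_{m}$, and then take the limit $m\to\infty$.

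\textbf{Step 2 (log resolution).} Next, choose a log resolution $\mu:\tilde{X}\to X$ of the analytic singularities of $h_{m}$, so that $\mu^{-1}\mathscr{I}(h_{m})=\mathcal{O}_{\tilde{X}}(-F)$ for a simple normal crossing divisor $F$. The local vanishing $R^{i}\mu_{\ast}\bigl(K_{\tilde{X}}\otimes\mu^{\ast}L\otimes\mathcal{O}_{\tilde{X}}(-F)\bigr)=0$ for $i>0$ together with the push-forward identification $\mu_{\ast}\bigl(K_{\tilde{X}}\otimes\mu^{\ast}L\otimes\mathcal{O}_{\tilde{X}}(-F)\bigr)=K_{X}\otimes L\otimes\mathscr{I}(h_{m})$ give, for $g:=f\circ\mu$,
\begin{equation*}
R^{q}f_{\ast}\bigl(K_{X}\otimes L\otimes\mathscr{I}(h_{m})\bigr)\cong R^{q}g_{\ast}\bigl(K_{\tilde{X}}\otimes N\bigr),
\end{equation*}
where $N:=\mu^{\ast}L\otimes\mathcal{O}_{\tilde{X}}(-F)$ carries a smooth metric whose curvature is semi-positive up to the $\epsilon_{m}$-error inherited from $h_{m}$.

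\textbf{Step 3 (vanishing) and main obstacle.} The problem now reads $H^{i}(Y,R^{q}g_{\ast}(K_{\tilde{X}}\otimes N)\otimes A)=0$, a Koll\'ar-type vanishing for a nef line bundle $N$ twisted by an ample pulled back from the base. I would invoke the Hodge-theoretic splitting $Rg_{\ast}(K_{\tilde{X}}\otimes N)\cong\bigoplus_{q}R^{q}g_{\ast}(K_{\tilde{X}}\otimes N)[-q]$ in the derived category, which reduces the statement to the vanishing of $H^{i+q}(\tilde{X},K_{\tilde{X}}\otimes N\otimes g^{\ast}A)$; the latter is an instance of Nadel vanishing for the pseudo-effective line bundle $N\otimes g^{\ast}A$. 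The main obstacle is precisely this last step: since $N$ is only nef, $N\otimes g^{\ast}A$ is not strictly positive along the fibres of $g$, so Kodaira--Nakano vanishing does not apply directly, and one must rely on the $L^{2}$/harmonic forms techniques to establish the required splitting and injectivity. Controlling the $\epsilon_{m}$-error and the compatibility with multiplier ideals when passing to the limit $m\to\infty$ is the delicate technical point that ties the three steps together.
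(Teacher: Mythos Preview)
Your outline is a genuinely different route from the paper's, but it contains a real gap rather than merely a different decomposition.

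\textbf{The gap in Step 3.} Even granting the derived splitting $Rg_{\ast}(K_{\tilde X}\otimes N)\simeq\bigoplus_{q}R^{q}g_{\ast}(K_{\tilde X}\otimes N)[-q]$, it does \emph{not} reduce the statement to the vanishing of $H^{i+q}(\tilde X,K_{\tilde X}\otimes N\otimes g^{\ast}A)$. What the splitting gives is that $H^{i}(Y,R^{q}g_{\ast}(K_{\tilde X}\otimes N)\otimes A)$ is a direct summand of $H^{i+q}(\tilde X,K_{\tilde X}\otimes N\otimes g^{\ast}A)$; but the latter group is in general \emph{nonzero}. For instance, take $N=\mathcal{O}_{\tilde X}$: then $H^{j}(\tilde X,K_{\tilde X}\otimes g^{\ast}A)\neq 0$ whenever $R^{j}g_{\ast}K_{\tilde X}\neq 0$, which happens for $0\leqslant j\leqslant$ the relative dimension. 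So you cannot hope to kill the summand by killing the ambient group, and no Nadel/Kodaira--Nakano vanishing applies since $N\otimes g^{\ast}A$ is not big. Koll\'ar's vanishing is not a consequence of the splitting alone; it needs in addition torsion-freeness of the $R^{q}g_{\ast}$ and an injectivity theorem, which is exactly where the work lies.

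\textbf{Issues in Step 1.} Strong openness does not assert $\mathscr{I}(h_{m})=\mathscr{I}(h)$ for Demailly's Bergman approximants; it says $\mathscr{I}(h)=\bigcup_{\varepsilon>0}\mathscr{I}(h^{1+\varepsilon})$. Demailly's approximation gives $\mathscr{I}(h)\subset\mathscr{I}(h_{m})$, and arranging equality while keeping the curvature loss $\epsilon_{m}\to 0$ is a separate (and delicate) equisingular approximation issue. More importantly, ``taking the limit $m\to\infty$'' is not a valid operation for a cohomology vanishing: you must prove the vanishing for a fixed $m$, at which point the $\epsilon_{m}$-negativity must be absorbed into $A$---and this forces $A$ to depend on $m$, which is not allowed.

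\textbf{What the paper does.} The paper bypasses both issues by working directly with the singular pair $(L,h)$: it invokes Matsumura's torsion-freeness for $R^{q}f_{\ast}(K_{X}\otimes L\otimes\mathscr{I}(h))$ and the Gongyo--Matsumura injectivity theorem (both valid for arbitrary singular psh metrics), and then runs Koll\'ar's classical hyperplane-section induction on $\dim X$. No approximation, no resolution, no splitting theorem is needed; the analytic input is already packaged in those two results.
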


This result is a generalization of the main result in \cite{Wu17}. A direct consequence is
\begin{corollary}\label{c1}
Under the same assumptions as in Theorem \ref{t1}, if $A$ is an ample and globally generated line bundle and $A^{\prime}$ is a nef line bundle over $Y$, then the sheaf $R^{q}f_{\ast}(K_{X}\otimes L\otimes\mathscr{I}(h))\otimes A^{m}\otimes A^{\prime}$ is globally generated for any $q\geqslant0$ and $m\geqslant\dim Y+1$.
\end{corollary}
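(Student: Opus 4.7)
The plan is to deduce the global generation statement from Theorem \ref{t1} via a standard Castelnuovo--Mumford regularity argument. Set $n=\dim Y$ and $\mathscr{G}=R^{q}f_{\ast}(K_{X}\otimes L\otimes\mathscr{I}(h))\otimes A^{\prime}$. I will verify that $\mathscr{G}$ is $(n+1)$-regular with respect to the ample globally generated line bundle $A$ in Mumford's sense; Mumford's regularity theorem then yields that $\mathscr{G}\otimes A^{m}$ is globally generated for every $m\geq n+1$, which is exactly the content of the corollary.

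The $(n+1)$-regularity condition asks that $H^{i}(Y,\mathscr{G}\otimes A^{n+1-i})=0$ for each $i\geq 1$. For $i>n$ the vanishing is automatic by Grothendieck's theorem since $\dim Y=n$. For $1\leq i\leq n$ the exponent satisfies $n+1-i\geq 1$, so $A^{n+1-i}$ is ample; tensoring with the nef bundle $A^{\prime}$ preserves ampleness, so $A^{n+1-i}\otimes A^{\prime}$ is an ample line bundle on $Y$. Theorem \ref{t1}, applied with this ample bundle in place of $A$, gives
\begin{equation*}
H^{i}\bigl(Y,R^{q}f_{\ast}(K_{X}\otimes L\otimes\mathscr{I}(h))\otimes A^{n+1-i}\otimes A^{\prime}\bigr)=0,
\end{equation*}
which is precisely the required vanishing.

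I do not anticipate any real obstacle; the argument is essentially formal once Theorem \ref{t1} is in hand. The only two points worth isolating are (i) that ample tensor nef remains ample, which is what lets one absorb the twist by $A^{\prime}$ into the ample bundle hypothesis of Theorem \ref{t1}, and (ii) that $A$ is assumed globally generated, which is precisely the hypothesis needed to invoke Mumford's regularity theorem. Given these observations, the conclusion for every $m\geq n+1$ follows at once, since $(n+1)$-regularity of $\mathscr{G}$ propagates to $m$-regularity for all $m\geq n+1$, and $m$-regularity forces global generation of $\mathscr{G}\otimes A^{m}$.
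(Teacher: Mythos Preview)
Your proof is correct and follows essentially the same approach as the paper: both deduce the result from Theorem \ref{t1} via Castelnuovo--Mumford regularity, using that $A^{k}\otimes A^{\prime}$ is ample for $k\geq 1$ (ample tensor nef). The only cosmetic difference is that the paper verifies $0$-regularity of $R^{q}f_{\ast}(K_{X}\otimes L\otimes\mathscr{I}(h))\otimes A^{m}\otimes A^{\prime}$ for each fixed $m\geq\dim Y+1$, whereas you establish $(\dim Y+1)$-regularity of $\mathscr{G}$ once and then invoke the propagation of regularity; the underlying vanishing statements are identical.
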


Our next result is as follows:
\begin{theorem}\label{t2}
Let $f:X\rightarrow Y$ be a smooth fibration between projective manifolds $X$ and $Y$. Let $(L,h)$ be a pseudo-effective line bundle over $X$. Assume that
\begin{equation*}
    \psi_{l}(y)=(\int_{X_{y}}|u|^{2}_{h})^{l}\in L^{1}_{\mathrm{loc}}(Y)
\end{equation*}
for any $l\in\mathbb{Z}_{+}$ and local section $u$ of $f_{\ast}(K_{X/Y}\otimes L\otimes\mathscr{I}(h))$. Then if $\mathcal{E}=f_{\ast}(K_{X/Y}\otimes L\otimes\mathscr{I}(h))$ is locally free, it is nef.
\end{theorem}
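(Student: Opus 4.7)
The plan is to equip the locally free sheaf $\mathcal{E}$ with a natural singular Hermitian metric induced by the fibrewise $L^{2}$-structure of $(L,h)$, prove that this metric has Griffiths semi-positive curvature, and then invoke the standard criterion that a locally free sheaf carrying a singular Hermitian metric of Griffiths semi-positive curvature is nef. The integrability hypothesis on $\psi_{l}$ is expected to play its role precisely in the limiting procedure that produces this metric.

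First I would define the metric. On a trivialising open $U\subset Y$, a section $u\in\Gamma(U,\mathcal{E})$ is realised as a holomorphic section of $K_{X/Y}\otimes L$ over $f^{-1}(U)$ that lies in $\mathscr{I}(h)$, and I would set
\[
\|u\|_{\mathcal{E}}^{2}(y):=\int_{X_{y}}|u|_{h}^{2},
\]
which by the $l=1$ case of the assumption and Fubini is a well-defined locally integrable function of $y$. Because $h$ is a priori singular, I would next approximate it by a decreasing family $h_{\nu}$ of smooth Hermitian metrics on $L$ obtained from Demailly's equisingular regularisation, chosen so that $\sqrt{-1}\Theta_{h_{\nu}}(L)\geqslant -\epsilon_{\nu}\omega$ for a fixed K\"ahler form $\omega$ on $X$ with $\epsilon_{\nu}\downarrow 0$, and so that $\mathscr{I}(h_{\nu})$ stabilises to $\mathscr{I}(h)$ by the strong openness property. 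Since $f$ is smooth, Berndtsson's curvature theorem, applied at each level $\nu$ after absorbing the $\epsilon_{\nu}$ by a small twist with an ample bundle, yields that the fibrewise $L^{2}$ metric on $f_{\ast}(K_{X/Y}\otimes L\otimes\mathscr{I}(h_{\nu}))$ is Nakano, and in particular Griffiths, semi-positive. Sending $\nu\to\infty$ should then produce a singular Hermitian metric on $\mathcal{E}$ whose curvature current is Griffiths semi-positive.

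The hard part will be justifying this limit passage: one must verify that the limiting metric on $\mathcal{E}$ is finite almost everywhere, that it is not so degenerate as to render Griffiths positivity vacuous, and that the fibrewise integration can be interchanged with the approximation in $h_{\nu}$. This is precisely where the full strength of the integrability hypothesis is used: the condition $\psi_{l}\in L^{1}_{\mathrm{loc}}(Y)$ for every $l$ is equivalent, by H\"older's inequality, to $\psi_{1}\in L^{p}_{\mathrm{loc}}(Y)$ for every $p$, which provides the uniform integrability needed to pass Berndtsson's curvature estimates through the limit and to control the polar locus of the limiting metric. Once Griffiths semi-positivity of this metric is in hand on the locally free $\mathcal{E}$, nefness follows from the established nefness criterion for locally free sheaves endowed with a positively curved singular Hermitian metric.
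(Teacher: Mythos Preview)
Your proposal has a genuine gap at the very last step. The ``established nefness criterion'' you invoke does not exist for \emph{singular} Hermitian metrics: a locally free sheaf carrying a positively curved singular Hermitian metric is only known to be pseudo-effective (equivalently, weakly positive in the sense of Viehweg), not nef. Already for line bundles this fails, since a pseudo-effective line bundle carries by definition a singular metric with semi-positive curvature current but is in general not nef. The paper itself stresses this point (see the implication diagram after the definitions, and the discussion surrounding Theorem~\ref{t7} in the Appendix): Berndtsson's construction of a positively curved $L^{2}$ metric on $\mathcal{E}$ yields pseudo-effectivity with no extra hypothesis, and the whole content of Theorem~\ref{t2} is to upgrade this to nefness under the additional integrability assumption on $\psi_{l}$.

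Relatedly, your explanation of where the hypothesis $\psi_{l}\in L^{1}_{\mathrm{loc}}$ enters is not the actual mechanism. The paper does not use it to control a limit of Berndtsson metrics. Instead, the argument runs through Viehweg's fibre-product trick: one forms the $m$-fold fibre product $f_{m}:X_{m}\to Y$ and the line bundle $(L_{m},h_{m})$, and Lemma~\ref{l3} gives $(f_{m})_{\ast}(K_{X_{m}/Y}\otimes L_{m})=\mathcal{E}^{\otimes m}$. The integrability of $\psi_{m}$ is exactly what, via H\"older's inequality along the fibres (Lemma~\ref{l4}), forces every decomposable section $u^{\otimes m}$ to lie in $\mathscr{I}(h_{m})$, so that in fact
\[
(f_{m})_{\ast}(K_{X_{m}/Y}\otimes L_{m}\otimes\mathscr{I}(h_{m}))=\mathcal{E}^{\otimes m}.
\]
One then applies the Koll\'ar-type vanishing theorem (Theorem~\ref{t1}) and Castelnuovo--Mumford regularity (Corollary~\ref{c1}) to conclude that $\mathcal{E}^{\otimes m}\otimes A$ is globally generated for a fixed ample $A$ and \emph{every} $m\geqslant 1$; this is the nefness criterion that actually works, since it makes $\mathcal{O}_{\mathcal{E}}(m)\otimes\pi^{\ast}A$ globally generated for all $m$. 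Without the fibre-product identification of $\mathcal{E}^{\otimes m}$ with a direct image of the form $(f_{m})_{\ast}(K_{X_{m}/Y}\otimes L_{m}\otimes\mathscr{I}(h_{m}))$, there is no vanishing theorem to apply, and that identification is precisely what the $\psi_{l}$ hypothesis buys.
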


Since $u$ is a local section of $f_{\ast}(K_{X/Y}\otimes L\otimes\mathscr{I}(h))$, $\psi_{l}$ is not identically equal to $\infty$, so it is meaningful to ask it to be locally integrable.

The assumption that $\mathcal{E}$ is locally free is not essential. First, it is proved in \cite{Mat16} that $\mathcal{E}$ is always torsion-free. On the other hand, most notions concerning the positivity can be generalized from a locally free sheaf to a torsion-free one \cite{Pau16,PT14}. Technically, a torsion-free coherent sheaf $\mathcal{E}$ must be locally free outside a $2$-codimensional subvariety. So $\mathcal{E}$ is called ample (resp. nef, big,...) if it is ample (resp. nef, big,...) on its locally free part. The proof for Theorem \ref{t2} also works on its locally free part when $\mathcal{E}$ is merely torsion-free. Moreover, through this reduction, we will can treat $f_{\ast}(K_{X/Y}\otimes L)$ and $f_{\ast}(K_{X/Y}\otimes L\otimes\mathscr{I}(h))$ as locally free sheaves in the rest of this paper.

There are various articles studying the nefness of the direct images. One refers to \cite{Ber08,Pau16,PT14} for more details. The main stream of these articles is to use the Ohsawa--Takegoshi extension theorem to locally extend the sections along the certain fibre of $f:X\rightarrow Y$, which asks that the singular metric $h$ of $L$ to be well-defined along this fibre. As a result, if one wants to deduce the nefness of the direct images from their work, $h$ is required to be well-defined along all of the fibres. In this paper, it will be more flexible. Indeed, it is easy to verify that there exists such a $\psi_{l}$, which is locally integrable, but $h$ needs not to be well-defined on every $X_{y}$. We will show two examples in the text. As a consequence, we can prove that
\begin{theorem}\label{t3}
Let $E$ be a stable vector bundle over a compact Riemann surface $Y$. Assume that $(\det E,\phi)$ is pseudo-effective. If the Lelong number $\nu(l\phi)=0$ for all positive integer $l$, $E$ is nef.
\end{theorem}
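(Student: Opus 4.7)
The plan is to apply Theorem \ref{t2} to the projective bundle $\pi : X = \mathbb{P}(E) \to Y$, choosing a line bundle $L$ on $X$ whose direct image recovers $E$. Writing $r = \mathrm{rank}\,E$ and using the conventions $\pi_* \mathcal{O}_X(1) = E$ and $K_{X/Y} = \mathcal{O}_X(-r) \otimes \pi^*(\det E)$, set
$$L := \mathcal{O}_X(r+1) \otimes \pi^*(\det E)^{-1}.$$
Then $K_{X/Y} \otimes L \cong \mathcal{O}_X(1)$ and $\pi_*(K_{X/Y} \otimes L) \cong E$. Since $\pi$ is a smooth fibration between projective manifolds, producing a pseudo-effective singular Hermitian metric $h_L$ on $L$ that satisfies the integrability hypothesis of Theorem \ref{t2} will deliver the nefness of $E$.

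Pseudo-effectivity of $(\det E, \phi)$ on the curve $Y$ forces $\mu(E) = \deg E / r \geq 0$. Stability of $E$ on a compact Riemann surface invokes the Narasimhan--Seshadri theorem, which supplies a Hermite--Einstein metric $h_E$ on $E$ with $\sqrt{-1}\,\Theta(E, h_E) = \mu(E)\,\omega_Y \cdot \mathrm{Id}_E$ for a normalized Kähler form $\omega_Y$ on $Y$. The induced smooth metric on $\mathcal{O}_X(1)$ has curvature equal to the fibrewise Fubini--Study form plus $\mu(E)\pi^* \omega_Y$, which is semi-positive. Taking its $(r+1)$-st power and combining with the singular metric on $\pi^*(\det E)^{-1}$ coming from $\phi$ gives a singular metric $h_L$ on $L$; its curvature current is the sum of the smooth semi-positive part above and $-\pi^* dd^c \phi$, and the zero-Lelong-number hypothesis on $\phi$ keeps the global current semi-positive.

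The condition $\nu(l\phi)=0$ for every $l \in \mathbb{Z}_+$ has two uses. First, $\mathscr{I}(h_L^l) = \mathcal{O}_X$, so $\pi_*(K_{X/Y} \otimes L \otimes \mathscr{I}(h_L)) \cong E$. Second, for any local section $u$ of this sheaf the integrand $|u|^2_{h_L}$ differs from a smooth positive quantity by the factor $e^{-\pi^* \phi}$, and vanishing Lelong numbers force $e^{-c\phi}$ to be locally integrable on $Y$ for every $c>0$, by Skoda's integrability theorem; Fubini then gives $\psi_l \in L^1_{\mathrm{loc}}(Y)$ for every $l$. All hypotheses of Theorem \ref{t2} are thereby verified, yielding that $E = \pi_*(K_{X/Y} \otimes L \otimes \mathscr{I}(h_L))$ is nef.

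The main obstacle is the curvature and integrability analysis of $h_L$: the Hermite--Einstein structure controls the smooth, fibrewise positive part of the curvature and relates the base-direction curvature to $\mu(E)$, while the zero Lelong numbers of $\phi$ are exactly what is needed to trivialize $\mathscr{I}(h_L)$ and to secure the integrability $\psi_l \in L^1_{\mathrm{loc}}(Y)$. Bringing these two ingredients together to match every hypothesis of Theorem \ref{t2} is the technical core of the argument.
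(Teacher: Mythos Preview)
Your construction of $h_L$ contains a genuine gap. You tensor the $(r{+}1)$-st power of the smooth Hermite--Einstein induced metric on $\mathcal{O}_X(1)$ with the singular metric $e^{\pi^*\phi}$ on $\pi^*(\det E)^{-1}$; the curvature current of the result is
\[
(r{+}1)\,\omega_{\mathrm{FS}} \;+\; (r{+}1)\mu(E)\,\pi^*\omega_Y \;-\; \pi^*dd^c\phi,
\]
and the last summand is semi-\emph{negative}. Your assertion that ``the zero-Lelong-number hypothesis on $\phi$ keeps the global current semi-positive'' is false: on the curve $Y$, vanishing Lelong numbers only say that the positive measure $dd^c\phi$ has no atoms, not that it is bounded above by a fixed smooth form. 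An atomless $dd^c\phi$ can still be singular with respect to $\omega_Y$, so $(r{+}1)\mu(E)\,\omega_Y - dd^c\phi$ need not be $\geq 0$ in the base direction. Hence $(L,h_L)$ is not pseudo-effective in general and Theorem~\ref{t2} cannot be invoked.

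The paper's argument (Example~\ref{ex1}, for $g(Y)\geq 2$) fixes this by using the Narasimhan--Seshadri structure in a different way. The projectively flat transition matrices $g_{ij}=f_{ij}U_{ij}$ with $U_{ij}$ unitary allow one to put the \emph{singular} metric $h_i^{1/r}I_r$ on $E$ itself, built from the given $\phi$. The induced weight $\psi$ on $\mathcal{O}_E(1)$ then satisfies $dd^c\psi=\tfrac{1}{r}\pi^*dd^c\phi+\omega_{\mathrm{FS}}$, so that on $L=\mathcal{O}_E(r{+}1)\otimes\pi^*\det E^*$ with weight $\varphi=(r{+}1)\psi-\pi^*\phi$ one gets
\[
dd^c\varphi=\tfrac{r{+}1}{r}\,\pi^*dd^c\phi+(r{+}1)\,\omega_{\mathrm{FS}}-\pi^*dd^c\phi=\tfrac{1}{r}\,\pi^*dd^c\phi+(r{+}1)\,\omega_{\mathrm{FS}}\geq 0.
\]
The key point is that the metric on $\mathcal{O}_E(1)$ and the metric on $(\det E)^{-1}$ are drawn from the \emph{same} $\phi$, so the negative contribution cancels coherently rather than competing against an unrelated smooth form. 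With this $\varphi$ one computes $\int_{X_y}|\eta|^2e^{-\varphi}=h_i^{1/r}\sum_\alpha|u^\alpha|^2$, and now the Lelong-number hypothesis enters exactly where it should, through Skoda, to give $\psi_l\in L^1_{\mathrm{loc}}$. Note also that the paper carries out this analytic step only for $g(Y)\geq 2$, where \cite{NaS65} supplies the projectively flat form, and handles $g=0$ and $g=1$ by direct classification (line bundles on $\mathbb{P}^1$, Atiyah's description on elliptic curves); your uniform treatment via a Hermite--Einstein metric would be attractive, but it needs a correct choice of singular metric on $L$.
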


At last, we will present in the appendix a general version of the positivity of the direct images appeared in \cite{Ber08,Pau16,PT14} in order to distinguish our result.

The organization of this paper is as follows. We first introduce some notions, including the definition of the positivity of a vector bundle, the fibre product and so on. Then we proceed to a talk about the vanishing theorem of the direct images in \S3. The further discussions about the positivity will be given in \S4. Two examples including the proof of Theorem \ref{t3} are presented in \S5. Finally, for reader's convenience and the completeness of this paper, we will provide in Appendix a general version of the nefness result of the direct images proved via the Ohsawa--Takagoshi theorem.

\section{Preliminaries}
Let $E$ be a holomorphic vector bundle of rank $r$ over a projective manifold $Y$. We denote by $\mathbb{P}(E^{\ast})$ the projectivized bundle of $E^{\ast}$ and by $\mathcal{O}_{E}(1):=\mathcal{O}_{\mathbb{P}(E^{\ast})}(1)$ the tautological line bundle. Let $\pi:X:=\mathbb{P}(E^{\ast})\rightarrow Y$ be the canonical projection. First we recall some notions concerning the positivity of $E$.

To begin, we discuss a relatively standard notion of a singular metric on a vector bundle introduced in \cite{Rau15}. By this we basically mean a measurable map $h$ from the base $Y$ to the space of non-negative Hermitian forms on the fiber. Moreover, $\det h<\infty$ almost everywhere. This additional condition helps to define the curvature associated to $h$.

We do, however, need to allow the Hermitian form to take the value $\infty$ for some vectors at some points in the base. Given a singular metric $h$, the norm function $|u|^{2}_{h}$, for any holomorphic section $u$ of $E$, is a measurable function from the total space of $E$ to $[0,\infty]$, whose restriction to any fiber $E_{y}$ is a quadratic form. (Certainly $|u|^{2}_{h}$ may be $\infty$ on a subspace of $E_{y}$.)

\begin{definition}
Let $h$ be a (singular) Hermitian metric of $E$.
\begin{enumerate}
  \item $h$ is negatively curved (or $h$ has Griffiths semi-negative curvature), if for any open subset $U\subset Y$ and any $v\in H^{0}(U,E)$, $\log|v|^{2}_{h}$ is plurisubharmonic on $U$.
  \item $h$ is positively curved (or $h$ has Griffiths semi-positive curvature), if the dual singular Hermitian metric $h^{\ast}$ of the dual vector bundle $E^{\ast}$ is negatively curved.
  \item $E$ is weakly positive in the sense of Viehweg, if on some Zariski open subset $U\subset Y$, for any integer $a>0$, there exists an integer $b>0$ such that $S^{ab}(E)\otimes bA$ is generated by global sections on $U$. Here $A$ is an auxiliary ample line bundle over $Y$.
  \item $E$ is pseudo-effective, if $\mathcal{O}_{E}(1)$ is pseudo-effective and the image of the non-nef locus $\text{NNef}(\mathcal{O}_{E}(1))$ (i.e. the union of all curves $C$ on $X$ with $\mathcal{O}_{E}(1)\cdot C<0$) under $\pi$ is a proper subset of $Y$.
  \item $E$ is almost nef if there exists a countable family of proper subvarieties $Z_{i}$ of $Y$ such that $E|_{C}$ is nef for any curve $C\not\subset\cup_{i}Z_{i}$.
  \item $E$ is nef, if $\mathcal{O}_{E}(1)$ is nef.
\end{enumerate}
\end{definition}

\begin{remark}
The relationships among the notions above are summarized below.

\xymatrix@R=0.5cm{
\textrm{nef} &  \ar@{=>}[dr]  &    &                       \\
\textrm{weakly positive} & \ar@{<=>}[r] & \textrm{pseudo-effective} \ar@{=>}[r] & \textrm{almost nef}\\
\textrm{positively curved} & \ar@{=>}[ur]  &    &}

Moreover, these notions can be generalized to a torsion-free coherent sheaf $E$. Technically, $E$ is locally free outside a 2-codimensional subvariety $Z$, then $E$ is nef (resp. pseudo-effective, almost nef,...) if $E|_{Y-Z}$ is. One can refer to \cite{DPS94,DPS01,Pau16,PT14} for the more details.
\end{remark}

Next we recall the definition of the fibre product.

\begin{definition}\label{d1}
Let $f:X\rightarrow Y$ be a fibration between two projective manifolds $X$ and $Y$, the fibre product, denoted by $(X\times_{Y}X,p^{2}_{1},p^{2}_{2})$, is a projective manifold coupled with two morphisms (we will also refer the fibre product merely to the manifold $X\times_{Y}X$ itself if nothing is confused), which satisfies the following properties:

1.The diagram
\begin{equation*}
\begin{array}[c]{ccc}
X\times_{Y}X&\stackrel{p^{2}_{2}}{\rightarrow}&X\\
\scriptstyle{p^{2}_{1}}\downarrow&&\downarrow\scriptstyle{f}\\
X&\stackrel{f}{\rightarrow}&Y
\end{array}
\end{equation*}
commutes.

2.If there is another projective manifold $Z$ with morphisms $q_{1}, q_{2}$ such that the diagram
\begin{equation*}
\begin{array}[c]{ccc}
Z&\stackrel{q_{2}}{\rightarrow}&X\\
\scriptstyle{q_{1}}\downarrow&&\downarrow\scriptstyle{f}\\
X&\stackrel{f}{\rightarrow}&Y
\end{array}
\end{equation*}
commutes, then there must exist a unique $g:Z\rightarrow X\times_{Y}X$ such that $p^{2}_{1}\circ g=q_{1},p^{2}_{2}\circ g=q_{2}$.

We inductively define the $m$-fold fibre product, and denote it by $X\times_{Y}\cdot\cdot\cdot\times_{Y}X$. Moreover, we denote two projections by
\begin{equation*}
 p^{m}_{1}:X\times_{Y}\cdot\cdot\cdot\times_{Y}X\rightarrow X
\end{equation*}
and
\begin{equation*}
 p^{m}_{2}:\underbrace{X\times_{Y}\cdot\cdot\cdot\times_{Y}X}_{m}\rightarrow \underbrace{X\times_{Y}\cdot\cdot\cdot\times_{Y}X}_{m-1}
\end{equation*}
respectively.
\end{definition}

The meaning of the fibre product is clear in the view of geometry. In fact, when we do the fibre product, geometrically it just means that we take the Cartesian product along the fibre. Namely, if $y$ is a regular value of $f$,
\begin{equation*}
 (X\times_{Y}\cdot\cdot\cdot\times_{Y}X)_{y}=X_{y}\times\cdot\cdot\cdot\times X_{y}.
\end{equation*}

Finally we introduce the following two lemmas without proof for the later use.

\begin{lemma}(Projection formula)\label{l1}
If $f:X\rightarrow Y$ is a holomorphic morphism between two complex manifolds $X$ and $Y$, $\mathcal{F}$ is a coherent sheaf on $X$, and $\mathcal{E}$ is a locally free sheaf on $Y$, then there is a natural isomorphism
\begin{equation*}
  f_{\ast}(\mathcal{F}\otimes f^{\ast}\mathcal{E})\cong f_{\ast}(\mathcal{F})\otimes\mathcal{E}.
\end{equation*}
\end{lemma}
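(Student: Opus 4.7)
The plan is to construct a canonical morphism and then check it is an isomorphism by working locally, exploiting the hypothesis that $\mathcal{E}$ is locally free.

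First I would write down the natural map $\Phi\colon f_{\ast}(\mathcal{F})\otimes_{\mathcal{O}_{Y}}\mathcal{E}\to f_{\ast}(\mathcal{F}\otimes_{\mathcal{O}_{X}}f^{\ast}\mathcal{E})$. On an open set $U\subset Y$, a section of the left-hand side is locally of the form $s\otimes e$ with $s\in\mathcal{F}(f^{-1}U)$ and $e\in\mathcal{E}(U)$; send it to $s\otimes f^{\ast}e$, where $f^{\ast}e\in(f^{\ast}\mathcal{E})(f^{-1}U)$ is the pullback section. This assignment is $\mathcal{O}_{Y}$-bilinear and compatible with restriction, hence defines a morphism of $\mathcal{O}_{Y}$-modules, and is manifestly natural in $f$, $\mathcal{F}$, and $\mathcal{E}$.

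Next I would verify that $\Phi$ is an isomorphism by working locally on $Y$. Since $\mathcal{E}$ is locally free, cover $Y$ by open sets $U_{\alpha}$ on which $\mathcal{E}|_{U_{\alpha}}\cong\mathcal{O}_{U_{\alpha}}^{\oplus r}$. On each such $U_{\alpha}$ the two functors in question commute with finite direct sums, so the left side becomes $f_{\ast}(\mathcal{F}|_{f^{-1}U_{\alpha}})^{\oplus r}$ while the right side becomes $f_{\ast}(\mathcal{F}|_{f^{-1}U_{\alpha}}\otimes\mathcal{O}_{f^{-1}U_{\alpha}}^{\oplus r})\cong f_{\ast}(\mathcal{F}|_{f^{-1}U_{\alpha}})^{\oplus r}$. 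Under these trivializations $\Phi|_{U_{\alpha}}$ is the identity, and in particular an isomorphism. Naturality in $\mathcal{E}$ guarantees that these local isomorphisms transform correctly under the transition cocycle of $\mathcal{E}$, so they glue to the global morphism $\Phi$, which is therefore an isomorphism.

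The only delicate point, and the closest thing to an obstacle, is that we are working in the analytic category: $\mathcal{F}$ is a coherent analytic sheaf and $f_{\ast}$, $f^{\ast}$ denote the analytic direct image and pullback. Nothing in the argument, however, goes beyond the tensor-product structure of $\mathcal{O}$-modules and the adjunction between $f^{-1}$ and $f_{\ast}$, both of which are available verbatim in this setting. The hypothesis that $\mathcal{E}$ be locally free is essential: without it one cannot reduce to the free case, and in general tensoring does not commute with pushforward, so the statement would fail.
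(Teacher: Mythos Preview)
Your argument is correct and is exactly the standard proof of the projection formula: build the natural map and check it is an isomorphism locally using the local freeness of $\mathcal{E}$. Note, however, that the paper does not prove this lemma at all; it is introduced explicitly as one of two lemmas stated ``without proof for the later use,'' so there is no authorial argument to compare against---your proof simply supplies what the paper omits.
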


\begin{lemma}(Base change)\label{l2}
Assume that $f:X\rightarrow Y, v:X^{\prime}\rightarrow X$ and $g:X^{\prime}\rightarrow Y^{\prime}$ are holomorphic morphisms between complex manifolds $X,X^{\prime},Y$ and $Y^{\prime}$. Let $\mathcal{F}$ be a coherent sheaf on $X$. $u:Y^{\prime}\rightarrow Y$ is a smooth morphism, such that
\begin{equation*}
\begin{array}[c]{ccc}
X^{\prime}&\stackrel{v}{\rightarrow}&X\\
\scriptstyle{g}\downarrow&&\downarrow\scriptstyle{f}\\
Y^{\prime}&\stackrel{u}{\rightarrow}&Y
\end{array}
\end{equation*}
commutes. Then for all $q\geqslant0$ there is a natural isomorphism
\begin{equation*}
   u^{\ast}R^{q}f_{\ast}(\mathcal{F})\cong R^{q}g_{\ast}(v^{\ast}\mathcal{F}).
\end{equation*}
\end{lemma}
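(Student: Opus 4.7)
The lemma is the analytic flat base change theorem: smoothness of $u$ forces flatness, hence exactness of $u^\ast$, and implicitly the commutative square is understood to be Cartesian (this is how it is used later in the paper). My plan is the classical resolution-and-spectral-sequence argument.

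First, I would produce the natural base change morphism $\beta^q : u^\ast R^q f_\ast \mathcal{F} \to R^q g_\ast v^\ast \mathcal{F}$. The identity $f \circ v = u \circ g$ combined with the $(u^\ast, u_\ast)$-adjunction yields a canonical transformation of functors $u^\ast f_\ast \to g_\ast v^\ast$; applying this to an acyclic resolution $\mathcal{F} \to \mathcal{I}^\bullet$ (flabby or soft sheaves on $X$ will do) and taking cohomology in each degree produces the $\beta^q$.

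Second, I would verify that $\beta^q$ is an isomorphism. Flatness of $u$ makes $u^\ast$ exact, so $u^\ast R^q f_\ast \mathcal{F} = \mathcal{H}^q(u^\ast f_\ast \mathcal{I}^\bullet)$. It therefore suffices to check (a) that $v^\ast \mathcal{I}^\bullet \to v^\ast \mathcal{F}$ is a resolution by $g_\ast$-acyclic sheaves, and (b) that the canonical map $u^\ast f_\ast \mathcal{I}^q \to g_\ast v^\ast \mathcal{I}^q$ is an isomorphism in each degree. Both claims are local on $Y'$, and over a Stein chart a smooth morphism is a projection from a polydisc bundle, so both reduce to the elementary statement that the functor of global sections of a coherent analytic sheaf commutes with tensoring by a flat $\mathcal{O}_Y$-algebra. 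The main (and only real) technical point is the $q=0$ case of (b): this is precisely where smoothness (and not merely holomorphicity) of $u$ is used, and granting it, item (a) follows by applying it on open subsets and the isomorphism $\beta^q$ then drops out by the standard comparison of the two derived functors.
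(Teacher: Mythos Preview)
The paper does not prove this lemma at all: it is introduced explicitly ``without proof for the later use'' as a standard fact. Your outline is the classical argument for flat base change in the analytic category and is essentially correct; the one point worth flagging is that you rightly observe the square must be Cartesian, an assumption the paper leaves implicit in the statement but relies on in every application (the fibre product computations in Lemma~\ref{l3}).
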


\section{A Koll\'{a}r-type vanishing theorem}
In this section, we shall prove Theorem \ref{t1}. As is well-known to the experts,  Koll\'{a}r's vanihisng theorem comes from two things: the torsion-freeness of the (higher) direct images and Koll\'{a}r's injectivity theorem. Fortunately, they have been generalized in \cite{GoM17,Mat16} to be suitable for our situation.

The Koll\'{a}r-type torsion-freeness says that
\begin{theorem}(\cite{Mat16})\label{t4}
Let $\pi:X\rightarrow\Delta$ be a surjective proper K\"{a}hler morphism from a complex manifold $X$ to an analytic space $\Delta$, and $(F,h)$ be a (singular) Hermitian line bundle over $X$ with semi-positive curvature. Then $R^{q}\pi_{\ast}(K_{X/Y}\otimes F\otimes\mathscr{I}(h))$ is torsion-free for every $q$.
\end{theorem}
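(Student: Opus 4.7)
The plan is to adapt the Takegoshi--Koll\'{a}r approach to the singular setting by combining an $L^{2}$-harmonic representation of the higher direct image with a unique continuation argument. Since torsion-freeness is local on $\Delta$, I would first shrink $\Delta$ to a Stein open set, fix a K\"{a}hler metric $\omega$ on $X$ (possible by the K\"{a}hler hypothesis on $\pi$), and identify sections of $R^{q}\pi_{\ast}(K_{X/\Delta}\otimes F\otimes\mathscr{I}(h))$ with Dolbeault classes of bidegree $(n,q)$ valued in $F$ that are square integrable with respect to $h$ and $\omega$, where $n=\dim X$.

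The key analytic tool is an $L^{2}$-harmonic representation theorem for the singular metric $h$: every such cohomology class is represented by a $\bar\partial$-harmonic $(n,q)$-form $u$. This would be established by approximating $h$ from above by a decreasing sequence of smooth metrics $h_{\varepsilon}$ with still semi-positive curvature (Demailly's regularization), producing harmonic representatives $u_{\varepsilon}$ with respect to each $h_{\varepsilon}$ via the Bochner--Kodaira--Nakano identity, and extracting a weak $L^{2}$-limit $u$. The multiplier ideal $\mathscr{I}(h)$ ensures that $u$ lives in the correct weighted $L^{2}$-space, so that it genuinely represents a section of the direct image.

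Now suppose $s$ is a torsion section, so that $gs=0$ for some nonzero holomorphic $g$ on $\Delta$, and let $u$ be a harmonic representative of $s$. Because $\pi^{\ast}g$ is holomorphic on $X$, the form $\pi^{\ast}g\cdot u$ is $\bar\partial$-closed and represents the trivial class $[gs]=0$, hence is $\bar\partial$-exact. The crucial step is to upgrade this exactness to pointwise vanishing: a Takegoshi-type Bochner argument, adapted to the singular metric $h$, shows that multiplication by $\pi^{\ast}g$ preserves $\bar\partial$-harmonicity in bidegree $(n,q)$, so $\pi^{\ast}g\cdot u$ is simultaneously harmonic and exact and must vanish identically by Hodge orthogonality. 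Consequently $u=0$ on the Zariski-open set $\{\pi^{\ast}g\neq 0\}\subset X$, and Aronszajn's unique continuation theorem for the elliptic operator $\Delta_{\bar\partial}$ propagates this to $u\equiv 0$ on all of $X$, forcing $s=0$.

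The principal obstacle is making the singular $L^{2}$-harmonic theory rigorous. One must maintain uniform control in the Bochner--Kodaira--Nakano inequality as $h_{\varepsilon}\downarrow h$, guarantee that the weak limit $u$ is genuinely $\bar\partial$-harmonic for $h$ and lies in the multiplier-weighted $L^{2}$-space, and verify that both the harmonicity of $\pi^{\ast}g\cdot u$ and the resulting identity $\pi^{\ast}g\cdot u=0$ survive the passage to the limit. This is precisely the analytic package assembled in \cite{Mat16}, refining the smooth-metric prototype of Takegoshi.
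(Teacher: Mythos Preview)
The paper does not give its own proof of this statement: Theorem~\ref{t4} is quoted verbatim from \cite{Mat16} and used as a black box in the proof of Theorem~\ref{t1}. So there is no in-paper argument to compare your proposal against. Your outline is in fact a fair sketch of the strategy carried out in \cite{Mat16}, which adapts Takegoshi's harmonic-form proof of Koll\'{a}r's torsion-freeness to singular metrics.

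Two small corrections to your sketch. First, Demailly's equisingular regularization does \emph{not} in general produce smooth metrics with semi-positive curvature; one only gets curvature bounded below by $-\varepsilon\omega$, and controlling this loss uniformly as $\varepsilon\to 0$ is exactly where the hard work in \cite{Mat16} lies. Second, once you have $\pi^{\ast}g\cdot u=0$ in $L^{2}$ with $g\not\equiv 0$ holomorphic, you do not need Aronszajn's theorem: the zero set $\{\pi^{\ast}g=0\}$ has measure zero, so $u=0$ almost everywhere directly, hence $u=0$ as an $L^{2}$ class. The unique continuation machinery is unnecessary here.
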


The injectivity theorem is
\begin{theorem}(Gongyo--Matsumura, \cite{GoM17}) \label{t5}
Let $(F,h_{F})$ and $(L,h_{L})$ be (singular) Hermitian line bundles with semi-positive curvature on a compact K\"{a}hler manifold $X$. Assume that there exists an $\mathbb{R}$-effective divisor $\Delta$ on $X$ such that $h_{F}=h^{a}_{L}h_{\Delta}$ for a positive real number $a$ and the singular metric $h_{\Delta}$ defined by $\Delta$. Then for a section $s$ of $L$ satisfying $\sup_{X}|s|_{h_{L}}<\infty$, the multiplication map induced by $s$
\begin{equation*}
   \Phi_{s}:H^{q}(X,K_{X}\otimes F\otimes\mathscr{I}(h_{F}))\stackrel{\otimes s}\rightarrow H^{q}(X,K_{X}\otimes F\otimes L\otimes\mathscr{I}(h_{F}h_{L}))
\end{equation*}
is injective for any $q$.
\end{theorem}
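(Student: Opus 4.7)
The plan is to adapt Enoki's harmonic-theoretic proof of Koll\'ar's classical injectivity theorem to the singular-metric setting, in the spirit of Matsumura's framework. The idea is to represent a cohomology class $\alpha\in H^{q}(X,K_{X}\otimes F\otimes\mathscr{I}(h_{F}))$ by an $L^{2}$-harmonic form $u$, and then to exploit the hypothesis $\Phi_{s}(\alpha)=0$ together with the curvature identity arising from $h_{F}=h_{L}^{a}h_{\Delta}$ to force $u$ to vanish almost everywhere.

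First I would set up $L^{2}$ Hodge theory. Let $Z\subset X$ be the union of the singular locus of $h_{F}$ and $\mathrm{Supp}(\Delta)$. Using Demailly's regularization I would produce a sequence of metrics $h_{F,\epsilon}$ on $F$ with analytic singularities, satisfying $\mathscr{I}(h_{F,\epsilon})=\mathscr{I}(h_{F})$ for small $\epsilon>0$ and $\sqrt{-1}\Theta_{h_{F,\epsilon}}(F)\geqslant-\epsilon\omega$, where $\omega$ is a fixed K\"ahler form. On $X\setminus Z$ I would equip a complete K\"ahler metric $\omega_{\epsilon}$ of the form $\omega+\epsilon\sqrt{-1}\partial\bar{\partial}\psi$ for a suitable $\psi$ blowing up along $Z$. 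Under the resulting $L^{2}$-Dolbeault isomorphism, $\alpha$ is represented by a harmonic $(n,q)$-form $u_{\epsilon}$ satisfying $\bar{\partial}u_{\epsilon}=\bar{\partial}^{*}_{h_{F,\epsilon}}u_{\epsilon}=0$.

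The key step is a Bochner-Kodaira-Nakano computation. The decomposition $h_{F}=h_{L}^{a}h_{\Delta}$ gives the curvature identity
\begin{equation*}
\sqrt{-1}\Theta_{h_{F}}(F)=a\sqrt{-1}\Theta_{h_{L}}(L)+[\Delta]
\end{equation*}
as $(1,1)$-currents, which is compatible with the semi-positivity of both $h_{F}$ and $h_{L}$. Since $\Phi_{s}(\alpha)=0$, we may write $su_{\epsilon}=\bar{\partial}v_{\epsilon}$ up to a controlled error. The Enoki-style pairing
\begin{equation*}
\|su_{\epsilon}\|^{2}_{h_{F,\epsilon}h_{L}}=(su_{\epsilon},\bar{\partial}v_{\epsilon})=(\bar{\partial}^{*}(su_{\epsilon}),v_{\epsilon}),
\end{equation*}
combined with the harmonicity $\bar{\partial}^{*}_{h_{F,\epsilon}}u_{\epsilon}=0$ and the Nakano-type inequality for $\sqrt{-1}\Theta_{h_{F,\epsilon}h_{L}}(F\otimes L)\geqslant 0$, yields uniform control of the right-hand side. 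The hypothesis $\sup_{X}|s|_{h_{L}}<\infty$ enters here precisely to keep the commutator $[\bar{\partial}^{*},s]$ bounded. Letting $\epsilon\to 0$, I would conclude $su=0$ almost everywhere; since $\{s=0\}$ is a proper analytic subset of $X$, the harmonic form $u$ then vanishes on a dense open set, hence $u=0$ in $L^{2}$ and $[u]=0$.

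The main obstacle is to justify the limit $\epsilon\to 0$. One must verify that the approximate harmonic forms $u_{\epsilon}$ converge, say in $L^{2}_{\mathrm{loc}}$, to a genuine $L^{2}$-harmonic representative for the singular metric $h_{F}$, and that the error terms coming from the regularization and from the commutator $[\bar{\partial}^{*},s]$ actually vanish in the limit. This is the technical heart of Matsumura's method; the decomposition $h_{F}=h_{L}^{a}h_{\Delta}$ together with the uniform bound on $|s|_{h_{L}}$ is precisely what allows all of the estimates to close up.
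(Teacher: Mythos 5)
The paper does not actually prove this statement: it is imported verbatim from Gongyo--Matsumura \cite{GoM17}, so there is no in-paper argument to compare against. Your proposal correctly identifies the strategy of the published proof (an Enoki-type harmonic-theoretic argument with equisingular approximations and complete K\"ahler metrics on Zariski-open sets), but as written it has genuine gaps precisely at the points where that proof does its work.

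First, you cannot in general have both $\mathscr{I}(h_{F,\epsilon})=\mathscr{I}(h_{F})$ and analytic singularities for $h_{F,\epsilon}$: Demailly's equisingular approximation only yields metrics smooth outside an analytic set $Z_{\epsilon}$, and this is exactly why the complete metrics $\omega_{\epsilon}$ on the complement and the ensuing non-compact $L^{2}$ Hodge theory are unavoidable rather than a convenience. Second, the decomposition $h_{F}=h_{L}^{a}h_{\Delta}$ is not there for ``compatibility with semi-positivity''; its role is that $\sqrt{-1}\Theta_{h_{F}}(F)\geqslant a\sqrt{-1}\Theta_{h_{L}}(L)$ as currents, so the Bochner--Kodaira identity applied to the harmonic $u_{\epsilon}$ forces $\langle\sqrt{-1}\Theta_{h_{L}}(L)\Lambda u_{\epsilon},u_{\epsilon}\rangle\rightarrow 0$, and it is this quantity---together with $\sup_{X}|s|_{h_{L}}<\infty$---that bounds the term involving $D'_{h_{L}}s$ arising when you compute $\bar{\partial}^{*}(su_{\epsilon})$; without making that link explicit, your ``uniform control of the right-hand side'' is asserted, not proved. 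Third, the passage to the limit $\epsilon\rightarrow 0$ (weak convergence of $u_{\epsilon}$ to a nonzero representative of the class, the dependence of the $L^{2}$ and harmonic spaces on $\epsilon$, and the vanishing of the regularization errors) is the technical heart of \cite{GoM17} and is left entirely open, as you yourself acknowledge. So the proposal is a correct road map to the proof, but not a proof.
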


Possessing these two theorems, the proof of Theorem \ref{t1} is routine.
\begin{proof}[Proof of Theorem \ref{t1}]
By asymptotic Serre vanishing theorem, we can choose a positive integer $m_{0}$ such that for all $m\geqslant m_{0}$,
\begin{equation*}
    H^{i}(Y,R^{q}f_{\ast}(K_{X}\otimes L\otimes\mathscr{I}(h))\otimes mA)=0
\end{equation*}
for $i>0, q\geqslant0$.
Fix an integer $m$ such that $m\geqslant m_{0}$ and  $mA$ is very ample.

We prove the theorem by induction on $n=\dim X$, the case $n=0$ being trivial.
Denote $A'=f^\ast (A)$ and let $H^{\prime}\in|mA^{\prime}|$ be the pullback of a general divisor $H\in|mA|$.
It follows from Bertini's theorem that we can assume $H$ is integral and $H^{\prime}$ is smooth (though possibly disconnected). Then we have a short exact sequence
\begin{equation*}
\begin{split}
0&\rightarrow K_{X}\otimes L\otimes A^{\prime}\rightarrow K_{X}\otimes L\otimes(m+1)A^{\prime}\\
&\rightarrow K_{H^{\prime}}\otimes L\otimes A^{\prime}|_{H^{\prime}}\rightarrow0
\end{split}
\end{equation*}
induced by multiplication with a section defining $H^{\prime}$. We claim that
\begin{equation}\label{e1}
\begin{split}
0&\rightarrow K_{X}\otimes L\otimes\mathscr{I}(h)\otimes A^{\prime}\rightarrow K_{X}\otimes L\otimes\mathscr{I}(h)\otimes(m+1)A^{\prime}\\
&\rightarrow K_{H^{\prime}}\otimes L\otimes\mathscr{I}(h)\otimes A^{\prime}|_{H^{\prime}}\rightarrow0
\end{split}
\end{equation}
is also an exact sequence. We only need to verify that
\begin{equation*}
    i:K_{X}\otimes L\otimes\mathscr{I}(h)\otimes A^{\prime}\rightarrow K_{X}\otimes L\otimes\mathscr{I}(h)\otimes(m+1)A^{\prime}
\end{equation*}
is injective. Consider the following communicative diagram
\begin{equation*}
\xymatrix{
    K_{X}\otimes L\otimes\mathscr{I}(h)\otimes A^{\prime}\ar[d]^{j} & \stackrel{i}{\longrightarrow} & K_{X}\otimes L\otimes\mathscr{I}(h)\otimes(m+1)A^{\prime}\ar[d]^{k} \\
    K_{X}\otimes L\otimes A^{\prime} & \stackrel{l}{\longrightarrow} &K_{X}\otimes L\otimes(m+1)A^{\prime}.
    }
\end{equation*}
Since $l\circ j=k\circ i$, the injectivity of $i$ follows from the injectivity of $j$ and $l$, which is obvious. We get from the short exact sequence (\ref{e1}) a long exact sequence
\begin{equation}\label{e2}
\begin{split}
0&\rightarrow f_{\ast}(K_{X}\otimes L\otimes\mathscr{I}(h)\otimes A^{\prime})\rightarrow f_{\ast}(K_{X}\otimes L\otimes\mathscr{I}(h)\otimes(m+1)A^{\prime})\\
&\rightarrow f_{\ast}(K_{H^{\prime}}\otimes L\otimes\mathscr{I}(h)\otimes A^{\prime}|_{H^{\prime}})\rightarrow R^{1}f_{\ast}(K_{X}\otimes L\otimes\mathscr{I}(h)\otimes A^{\prime})\\
&\rightarrow R^{1}f_{\ast}(K_{X}\otimes L\otimes\mathscr{I}(h)\otimes(m+1)A^{\prime})\rightarrow\cdots
\end{split}
\end{equation}
By Theorem \ref{t4} all the higher direct images of $K_{X}\otimes L\otimes\mathscr{I}(h)\otimes A^{\prime}$ are torsion-free.
Clearly the sheaves $R^{q}f_{\ast}(K_{H^{\prime}}\otimes L\otimes\mathscr{I}(h)\otimes A^{\prime}|_{H^{\prime}})$ are torsion on $H$.
Hence the long exact sequence (\ref{e2}) can be split into a family of short exact sequences:  for all $q\geq 0$,
\begin{equation}\label{e3}
\begin{split}
0&\rightarrow R^{q}f_{\ast}(K_{X}\otimes L\otimes\mathscr{I}(h)\otimes A^{\prime})\rightarrow R^{q}f_{\ast}(K_{X}\otimes L\otimes\mathscr{I}(h)\otimes(m+1)A^{\prime})\\
&\rightarrow R^{q}f_{\ast}(K_{H^{\prime}}\otimes L\otimes\mathscr{I}(h)\otimes A^{\prime}|_{H^{\prime}})\rightarrow0.
\end{split}
\end{equation}
On the other hand, applying the inductive hypothesis to each connected component of $H^{\prime}$,
we conclude that for all $i\geqslant1$
\begin{equation*}
   H^{i}(Y,R^{q}f_{\ast}(K_{H^{\prime}}\otimes L\otimes\mathscr{I}(h)\otimes A^{\prime}|_{H^{\prime}}))=0.
\end{equation*}
Furthermore, by the choice of $m$ we also have for all $i\geqslant1$
\begin{equation}\label{e4}
    H^{i}(Y,R^{q}f_{\ast}(K_{X}\otimes L\otimes\mathscr{I}(h)\otimes(m+1)A^{\prime}))=0.
\end{equation}
Now by taking the long exact sequence from the short exact sequence (\ref{e3}), we find for every $i>1$
\begin{equation*}
   H^{i}(Y,R^{q}f_{\ast}(K_{X}\otimes L\otimes\mathscr{I}(h)\otimes A^{\prime}))=0.
\end{equation*}
This proves the theorem for the cases where $i>1$.

To prove the case where $i=1$, we denote
\begin{equation*}
   B_{l}:=H^{1}(Y,R^{q}f_{\ast}(K_{X}\otimes L\otimes\mathscr{I}(h)\otimes lA^{\prime})).
\end{equation*}
We have $B_{m+1}=0$. Hence we consider the following commutative diagram.
\begin{equation*}
\xymatrix{
    B_{1}\ar[d] & \stackrel{\phi}{\longrightarrow} & H^{q+1}(X,K_{X}\otimes L\otimes\mathscr{I}(h)\otimes A^{\prime})\ar[d]^{\psi} \\
    B_{m+1} & {\longrightarrow} &H^{q+1}(X,K_{X}\otimes L\otimes\mathscr{I}(h)\otimes(m+1)A^{\prime})
    }
\end{equation*}
Here the horizontal maps are the canonical injective maps coming out of the Leray spectral sequence, and the vertical maps are induced by multiplication with sections defining $H^{\prime}$ and $H$ respectively.
By Theorem \ref{t5} the map $\psi$ is injective, and hence the composition $\psi\circ\phi$ is also injective.
Hence $B_1=0$ and we finish the proof of the theorem for the case where $i=1$.
\end{proof}

Before proving Corollary \ref{c1}, we will review the definition and a basic result of the Castelnuovo--Mumford regularity \cite{Mum66}.

\begin{definition}
Let $Y$ be a projective manifold and $L$  an ample and globally generated line bundle over $X$. Given an integer $m$, a coherent sheaf $F$ on $Y$ is $m$-regular with respect to $L$ if for all $i\geqslant1$
\begin{equation*}
H^{i}(Y,F\otimes L^{m-i})=0.
\end{equation*}
\end{definition}

\begin{theorem}(Mumford, \cite{Mum66})\label{t6}
Let $Y$ be a projective manifold and $L$ an ample and globally generated line bundle over $Y$. If $F$ is a coherent sheaf on $Y$ that is $m$-regular with respect to $L$, then the sheaf $F\otimes L^{m}$ is globally generated.
\end{theorem}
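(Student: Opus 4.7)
The plan is to prove this classical regularity theorem by induction on $d = \dim Y$, with the base case $d = 0$ being immediate since any coherent sheaf on a finite set of points is globally generated. For the inductive step, my approach is to reduce the statement to its restriction along a smooth hyperplane $H = (s = 0)$ cut out by a general section $s \in H^0(Y, L)$. Using Bertini together with the hypothesis that $L$ is globally generated, I can choose $s$ so that $H$ is smooth and moreover avoids all associated primes of $F$; then multiplication by $s$ yields, for every integer $k$, a short exact sequence
\begin{equation*}
    0 \to F \otimes L^{k-1} \to F \otimes L^k \to F|_H \otimes L^k|_H \to 0.
\end{equation*}

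First I would show that $F|_H$ is $m$-regular on $H$ with respect to $L|_H$. Specializing the above sequence to $k = m-i$ and feeding it into the long exact sequence in cohomology, the two vanishings $H^i(Y, F \otimes L^{m-i}) = 0$ and $H^{i+1}(Y, F \otimes L^{m-i-1}) = 0$ supplied by the $m$-regularity of $F$ squeeze out $H^i(H, F|_H \otimes L^{m-i}|_H) = 0$ for all $i \geq 1$. The inductive hypothesis then guarantees that $F|_H \otimes L^m|_H$ is already globally generated on $H$.

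Next I would lift this to global generation of $F \otimes L^m$ at every point of $H$. Taking $k = m$ in the displayed sequence and using the vanishing $H^1(Y, F \otimes L^{m-1}) = 0$ (the $i = 1$ case of $m$-regularity), the restriction map $H^0(Y, F \otimes L^m) \to H^0(H, F|_H \otimes L^m|_H)$ is surjective; combined with the canonical identification of fibers at points $y \in H$, this yields generation of $F \otimes L^m$ at every such $y$. For a point $y \in Y \setminus H$, I would rerun the same construction with a different section $s_y \in H^0(Y, L)$ chosen so that $y \in (s_y = 0) =: H_y$ while $H_y$ still avoids the finitely many remaining associated primes of $F$; such a choice exists because $L$ is ample and base-point free (so $\dim H^0(Y, L) \geq 2$ when $\dim Y \geq 1$), and the identical lifting argument then yields global generation at $y$.

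The hard part, as I see it, is the genericity manoeuvre of choosing sections of $L$ in general position with respect to $\mathrm{Ass}(F)$ while simultaneously controlling their vanishing locus through a prescribed point. Both requirements are handled routinely because $\mathrm{Ass}(F)$ is finite and $L$ is base-point free, but they deserve care. The companion statement that $m$-regularity propagates to $(m+k)$-regularity for all $k \geq 0$, obtained by a twin induction on $\dim Y$ and $k$ using the same short exact sequences, is purely cohomological bookkeeping and is not even needed for the stated global-generation conclusion.
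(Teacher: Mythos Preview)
The paper does not supply a proof of this theorem; it is quoted from \cite{Mum66} and used as a black box in the proof of Corollary~\ref{c1}, so there is no in-paper argument to compare against.

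Your hyperplane induction is the classical route and is essentially correct, with one caveat worth flagging. In the final step you want, for an arbitrary $y\in Y$, a divisor $H_y\in|L|$ through $y$ that still avoids every associated point of $F$; but if $\{y\}$ itself is a $0$-dimensional associated subvariety of $F$ (which $m$-regularity does not rule out), no such $H_y$ exists and multiplication by $s_y$ is no longer injective on $F$, so your short exact sequence collapses at exactly the point you need it. The standard remedy is precisely the companion statement you dismissed as ``not even needed'': once one knows that $m$-regularity propagates to $(m+k)$-regularity and that the multiplication maps $H^0(Y,F\otimes L^m)\otimes H^0(Y,L^k)\to H^0(Y,F\otimes L^{m+k})$ are surjective for all $k\ge0$, one takes $k\gg0$ so that $F\otimes L^{m+k}$ is globally generated by Serre and then factors the evaluation $H^0(Y,F\otimes L^m)\otimes L^k\to F\otimes L^{m+k}$ to conclude for $F\otimes L^m$, never needing to force a hyperplane through a prescribed point.
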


After this, we can  prove Corollary \ref{c1}.
\begin{proof}[Proof of Corollary \ref{c1}]
It follows from Theorem \ref{t1} that for every $i\geqslant1$ and $m\geqslant\dim Y+1$,
\begin{equation*}
  H^{i}(Y,R^{q}f_{\ast}(K_{X}\otimes L\otimes\mathscr{I}(h))\otimes A^{m-i}\otimes A^{\prime})=0.
\end{equation*}
Hence the sheaf $R^{q}f_{\ast}(K_{X}\otimes L\otimes\mathscr{I}(h))\otimes A^{m}\otimes A^{\prime}$ is $0$-regular with respect to $A$.
So it is globally generated by Theorem \ref{t6}.
\end{proof}

\section{The positivity of $f_{\ast}(K_{X/Y}\otimes L)$}
In this section, we shall prove Theorem \ref{t2}. The ingredient is the following observation:
\begin{lemma}\label{l3}
For any positive integer $m$, consider the $m$-fold fibre product $f_{m}:X_{m}=X\times_{Y}\cdot\cdot\cdot\times_{Y}X\rightarrow Y$. With the same notations as in Definition \ref{d1}, we have
\begin{equation*}
\begin{split}
  &(f_{m})_{\ast}(K_{X_{m}/Y}\otimes(p^{m}_{1}\otimes p^{m-1}_{1}p^{m}_{2}\otimes p^{m-2}_{1}p^{m-1}_{2}p^{m}_{2}\otimes\cdot\cdot\cdot\otimes p^{1}_{2}p^{2}_{2}...p^{m}_{2})^{\ast}(L))\\
  =&f_{\ast}(K_{X/Y}\otimes L)^{\otimes m}.
\end{split}
\end{equation*}
Here $L$ is an arbitrary line bundle on $X$. Moreover, if $h$ is a (singular) metric of $L$, $h_{m}=(p^{m}_{1}\otimes p^{m-1}_{1}p^{m}_{2}\otimes p^{m-2}_{1}p^{m-1}_{2}p^{m}_{2}\otimes\cdot\cdot\cdot\otimes p^{1}_{2}p^{2}_{2}...p^{m}_{2})^{\ast}h$ will be a metric of
\begin{equation*}
    L_{m}:=(p^{m}_{1}\otimes p^{m-1}_{1}p^{m}_{2}\otimes p^{m-2}_{1}p^{m-1}_{2}p^{m}_{2}\otimes\cdot\cdot\cdot\otimes p^{1}_{2}p^{2}_{2}...p^{m}_{2})^{\ast}(L).
\end{equation*}
We have the following subadditivity property of the multiplier ideal sheaves:
\begin{equation*}
    \mathscr{I}(h_{m})\subset(p^{m}_{1}\otimes p^{m-1}_{1}p^{m}_{2}\otimes p^{m}_{1}\otimes p^{m-2}_{1}p^{m-1}_{2}p^{m}_{2}\otimes\cdot\cdot\cdot\otimes p^{1}_{2}p^{2}_{2}...p^{m}_{2})^{\ast}\mathscr{I}(h).
\end{equation*}
\begin{proof}
We just prove the lemma with $m=3$, and the general case is the same. The calculation is nothing but using Lemma \ref{l1} and \ref{l2} repeatedly. Also we need the following two facts:

1. $(g\circ f)_{\ast}=g_{\ast}f_{\ast}$ for arbitrary morphisms $f$ and $g$;

2. $K_{X_{m}/Y}=(p^{m})^{\ast}_{1}K_{X/Y}\otimes (p^{m})^{\ast}_{2}K_{X_{m-1}/Y}$.

Then we complete the proof by carefully chasing the diagram.
\begin{equation*}
\begin{split}
 &(f_{3})_{\ast}(K_{X_{3}/Y}\otimes (q^{3})^{\ast}_{1}L\otimes (q^{3})^{\ast}_{2}(q^{2})^{\ast}_{1}L\otimes (q^{3})^{\ast}_{2}(q^{2})^{\ast}_{2}L)\\
=&f_{\ast}(q^{3}_{1})_{\ast}((q^{3}_{1})^{\ast}K_{X/Y}\otimes (q^{3}_{2})^{\ast}K_{X_{2}/Y}\otimes (q^{3}_{1})^{\ast}L\otimes (q^{3}_{2})^{\ast}(q^{2}_{1})^{\ast}L\otimes (q^{3}_{2})^{\ast}(q^{2}_{2})^{\ast}L)\\
=&f_{\ast}(K_{X/Y}\otimes L\otimes (q^{3}_{1})_{\ast}((q^{3}_{2})^{\ast}K_{X_{2}/Y}\otimes (q^{3}_{2})^{\ast}(q^{2}_{1})^{\ast}L\otimes (q^{3}_{2})^{\ast}(q^{2}_{2})^{\ast}L))\\
=&f_{\ast}(K_{X/Y}\otimes L\otimes f^{\ast}(f_{2})_{\ast}(K_{X_{2}/Y}\otimes (q^{2}_{1})^{\ast}L\otimes (q^{2}_{2})^{\ast}L)\\
=&f_{\ast}(K_{X/Y}\otimes L\otimes f^{\ast}(f_{\ast}(q^{2}_{1})_{\ast})((q^{2}_{1})^{\ast}K_{X/Y}\otimes (q^{2}_{2})^{\ast}K_{X/Y}\otimes (q^{2}_{1})^{\ast}L\otimes (q^{2}_{2})^{\ast}L)\\
=&f_{\ast}(K_{X/Y}\otimes L\otimes f^{\ast}f_{\ast}(K_{X/Y}\otimes L\otimes (q^{2}_{1})_{\ast}((q^{2}_{2})^{\ast}K_{X/Y}\otimes (q^{2}_{2})^{\ast}L))\\
=&f_{\ast}(K_{X/Y}\otimes L)\otimes f_{\ast}(K_{X/Y}\otimes L\otimes f^{\ast}f_{\ast}(K_{X/Y}\otimes L))\\
=&f_{\ast}(K_{X/Y}\otimes L)^{\otimes3}.
\end{split}
\end{equation*}
The last assertion comes from the subadditivity of the multiplier ideal sheaves proved in \cite{DEL00}.

In fact, since
\begin{equation*}
    h_{m}=(p^{m}_{1}\otimes p^{m-1}_{1}p^{m}_{2}\otimes p^{m-2}_{1}p^{m-1}_{2}p^{m}_{2}\otimes\cdot\cdot\cdot\otimes p^{1}_{2}p^{2}_{2}...p^{m}_{2})^{\ast}h,
\end{equation*}
we have
\begin{equation}\label{e5}
    \mathscr{I}(h_{m})\subset\mathscr{I}((p^{m}_{1})^{\ast}h)\otimes\mathscr{I}((p^{m-1}_{1}p^{m}_{2})^{\ast}h)\otimes\cdots\otimes\mathscr{I}((p^{1}_{2}p^{2}_{2}...p^{m}_{2})^{\ast}h)
\end{equation}
by the main result (Theorem 2.6) in \cite{DEL00}. One more application of Theorem 2.6 in \cite{DEL00} implies that
\begin{equation}\label{e6}
\begin{split}
\mathscr{I}((p^{m}_{1})^{\ast}h)&=(p^{m}_{1})^{\ast}\mathscr{I}(h),\\
\mathscr{I}((p^{m-1}_{1}p^{m}_{2})^{\ast}h)&=(p^{m-1}_{1}p^{m}_{2})^{\ast}\mathscr{I}(h),\\
\mathscr{I}((p^{m-2}_{1}p^{m-1}_{2}p^{m}_{2})^{\ast}h)&=(p^{m-2}_{1}p^{m-1}_{2}p^{m}_{2})^{\ast}\mathscr{I}(h),\\
    &\cdots\\
    \mathscr{I}((p^{1}_{2}p^{2}_{2}...p^{m}_{2})^{\ast}h)&=(p^{1}_{2}p^{2}_{2}...p^{m}_{2})^{\ast}\mathscr{I}(h).
\end{split}
\end{equation}

Indeed, on a local coordinate ball $U$ of $Y$, we have
\begin{equation*}
    X|_{f^{-1}(U)}=U\times X_{y}
\end{equation*}
and
\begin{equation*}
    X_{m}|_{f^{-1}_{m}(U)}=\underbrace{X_{y}\times\cdots\times X_{y}}_{m-1}\times(U\times X_{y}).
\end{equation*}
Thus using the notation of Theorem 2.6 in \cite{DEL00}, we apply its first statement to $X_{1}=\underbrace{X_{y}\times\cdots\times X_{y}}_{m-1}$ and $X_{2}=U\times X_{y}$ with $\phi_{1}=1$ and $\phi_{2}$ the weight function of $h$. Then we get
\begin{equation*}
\mathscr{I}((p^{m}_{1})^{\ast}h)=(p^{m}_{1})^{\ast}\mathscr{I}(h).
\end{equation*}
The other formulas are the same.

Combined with (\ref{e5}) and (\ref{e6}), the proof is finished.
\end{proof}
\end{lemma}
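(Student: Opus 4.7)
My plan is to prove both assertions by induction on $m$, with the base case $m=1$ being trivial (both sides reduce to $f_{\ast}(K_{X/Y}\otimes L)$ and $\mathscr{I}(h)$ respectively). For the inductive step, I would exploit the recursive structure $X_{m}\cong X\times_{Y}X_{m-1}$, which yields the clean decompositions
\begin{equation*}
K_{X_{m}/Y}=(p^{m}_{1})^{\ast}K_{X/Y}\otimes(p^{m}_{2})^{\ast}K_{X_{m-1}/Y},\qquad L_{m}=(p^{m}_{1})^{\ast}L\otimes(p^{m}_{2})^{\ast}L_{m-1}.
\end{equation*}
This lets me split $K_{X_{m}/Y}\otimes L_{m}$ into two tensor factors, one pulled back from $X$ via $p^{m}_{1}$ and one pulled back from $X_{m-1}$ via $p^{m}_{2}$.

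To compute $(f_{m})_{\ast}(K_{X_{m}/Y}\otimes L_{m})$, I would factor $f_{m}=f_{m-1}\circ p^{m}_{2}$ and push forward in two stages. The projection formula (Lemma \ref{l1}) applied to the $p^{m}_{2}$-pullback factor, combined with base change (Lemma \ref{l2}) applied to the Cartesian square with vertices $X_{m},X,X_{m-1},Y$, collapses $(p^{m}_{2})_{\ast}(p^{m}_{1})^{\ast}(K_{X/Y}\otimes L)$ into $f_{m-1}^{\ast}f_{\ast}(K_{X/Y}\otimes L)$. A second application of the projection formula pulls $f_{\ast}(K_{X/Y}\otimes L)$ out of the $f_{m-1}$-pushforward as a direct tensor factor, and the remaining pushforward is precisely the inductive hypothesis applied on $X_{m-1}$. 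Chaining these isomorphisms yields $f_{\ast}(K_{X/Y}\otimes L)^{\otimes m}$.

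For the multiplier ideal assertion, since $h_{m}$ is an external tensor product of pullbacks of $h$ along the various compositions of projections, I would apply the subadditivity theorem of \cite{DEL00} iteratively to obtain the tensor-product bound
\begin{equation*}
\mathscr{I}(h_{m})\subset\bigotimes_{j}\mathscr{I}(\pi_{j}^{\ast}h),
\end{equation*}
where the $\pi_{j}$ range over the distinct projections appearing in the definition of $L_{m}$. Each factor is then rewritten as $\pi_{j}^{\ast}\mathscr{I}(h)$ using the pullback compatibility of multiplier ideals, which is available because locally on a coordinate ball $U\subset Y$ the fibre product genuinely splits as a Cartesian product $X_{y}\times\cdots\times X_{y}\times(U\times X_{y})$, reducing the situation to the product case treated in \cite{DEL00}.

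The main obstacle I anticipate is applying base change cleanly, since Lemma \ref{l2} requires smoothness of the horizontal map while $f$ itself need not be smooth. I would sidestep this by restricting to the Zariski-open locus over which $f$ is smooth (enough for the statement, which is about coherent sheaves on $Y$ and torsion-free quotients), or equivalently by appealing to flat base change, which applies because $X\times_{Y}X_{m-1}$ is the honest fibre product. A secondary, purely bookkeeping task will be verifying that the recursive decomposition $L_{m}=(p^{m}_{1})^{\ast}L\otimes(p^{m}_{2})^{\ast}L_{m-1}$ is indeed consistent with the telescoping product of projections spelled out in the statement; this is a straightforward unwinding of the definitions of $p^{m}_{1}$ and $p^{m}_{2}$ but must be kept meticulous so that the induction goes through.
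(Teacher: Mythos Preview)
Your proposal is correct and follows essentially the same approach as the paper. The paper carries out the computation explicitly for $m=3$ (factoring $f_{3}=f\circ p^{3}_{1}$ and repeatedly applying the projection formula and base change, together with the same splitting $K_{X_{m}/Y}=(p^{m}_{1})^{\ast}K_{X/Y}\otimes(p^{m}_{2})^{\ast}K_{X_{m-1}/Y}$ you identify), whereas you package the same ingredients into a clean induction; for the multiplier ideal part both arguments invoke the subadditivity theorem of \cite{DEL00} and then the local product decomposition $X_{m}|_{f_{m}^{-1}(U)}\cong X_{y}^{m-1}\times(U\times X_{y})$ to rewrite $\mathscr{I}(\pi_{j}^{\ast}h)$ as $\pi_{j}^{\ast}\mathscr{I}(h)$. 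Your concern about the smoothness hypothesis in Lemma~\ref{l2} is a fair technical point that the paper does not address explicitly, but note that the only uses of this lemma in the paper are under the standing assumption that $f$ is a smooth fibration (Theorem~\ref{t2}) or after restricting to the regular locus (Theorem~\ref{t7}), so your proposed workaround is exactly what is implicitly being done.
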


Furthermore, we need the following lemma concerning the behaviour of the singularity of a pseudo-effective metric after being pulled back through the fibre product.

\begin{lemma}\label{l4}
Let $f:X\rightarrow Y$ be a smooth fibration between two projective manifolds $X$ and $Y$. Let $(L,h)$ be a pseudo-effective line bundle on $X$. Moreover,
\begin{equation*}
    \psi_{l}(y)=(\int_{X_{y}}|u_{j}|^{2}_{h})^{l}\in L^{1}_{\mathrm{loc}}(Y)
\end{equation*}
for any $l\in\mathbb{Z}_{+}$ and $m$ local sections $u_{1},...,u_{m}$ of
\begin{equation*}
    \Gamma(U,f_{\ast}(K_{X/Y}\otimes L\otimes\mathscr{I}(h))).
\end{equation*}
Here $(U,(y_{1},...,y_{n}))$ is a coordinate neighborhood of $y\in Y$. Consider the $m$-fold fibre product $f_{m}:X_{m}=X\times_{Y}\cdot\cdot\cdot\times_{Y}X\rightarrow Y$. If we denote
\begin{equation*}
    L_{m}:=(p^{m}_{1}\otimes p^{m-1}_{1}p^{m}_{2}\otimes p^{m-2}_{1}p^{m-1}_{2}p^{m}_{2}\otimes\cdot\cdot\cdot\otimes p^{1}_{2}p^{2}_{2}...p^{m}_{2})^{\ast}(L),
\end{equation*}
and $h_{m}$ with weight function $\varphi_{m}$ being the metric induced by $h$, then
\begin{equation*}
    \int_{f^{-1}_{m}(U)}|u^{\otimes m}|^{2}e^{-\varphi_{m}}<+\infty.
\end{equation*}
Here $u^{\otimes m}$ is defined as
\begin{equation*}
    u^{\otimes m}=(p^{m}_{1})^{\ast}u_{1}\otimes(p^{m-1}_{1}p^{m}_{2})^{\ast}u_{2}\otimes \cdot\cdot\cdot\otimes(p^{1}_{2}p^{2}_{2}...p^{m}_{2})^{\ast}u_{m},
\end{equation*}
which is a section of
\begin{equation*}
   \Gamma(U,(f_{m})_{\ast}(K_{X_{m}/Y}\otimes L_{m}))=H^{0}(f^{-1}_{m}(U),K_{X_{m}/Y}\otimes L_{m})
\end{equation*}
by Lemma \ref{l3}. In particular, it means that
\begin{equation*}
    u^{\otimes m}\in\Gamma(U,(f_{m})_{\ast}(K_{X_{m}/Y}\otimes L_{m}\otimes\mathscr{I}(h_{m}))).
\end{equation*}
\begin{proof}
If we take the coordinate of $f^{-1}(U)$ to be $((y_{1},...,y_{n}),(x_{1},...,x_{l}))$, locally the weight function $\varphi$ of $h$ can be written as:
\begin{equation*}
   \varphi=\varphi((y_{1},...,y_{n}),(x_{1},...,x_{l})).
\end{equation*}
Since $f$ is smooth, we have $f^{-1}_{m}(U)=U\times X^{1}_{y}\times\cdot\cdot\cdot\times X^{m}_{y}$. Here we add the upper index $\{1,...,m\}$ to distinguish the fibres. Then we can take the local coordinate ball of $(X_{m})_{y}$ to be
\begin{equation*}
    (f^{-1}_{m}(U),((y_{1},...,y_{n}),(x^{1}_{1},...,x^{1}_{l}),...,(x^{m}_{1},...,x^{m}_{l}))),
\end{equation*}
and the weight function $\varphi_{m}$ of $h_{m}$ would be
\begin{equation*}
\begin{split}
\varphi_{m}&=(p^{m}_{1}+p^{m-1}_{1}p^{m}_{2}+p^{m-2}_{1}p^{m-1}_{2}p^{m}_{2}+...+ p^{1}_{2}p^{2}_{2}...p^{m}_{2})^{\ast}\varphi\\
           &=\sum_{j}\varphi((y_{1},...,y_{n}),(x^{j}_{1},...,x^{j}_{l})).
\end{split}
\end{equation*}
We claim that for any (local) section $u^{\otimes m}$ of
\begin{equation*}
   \Gamma(U,(f_{m})_{\ast}(K_{X_{m}/Y}\otimes L_{m}))=H^{0}(f^{-1}_{m}(U),K_{X_{m}/Y}\otimes L_{m})
\end{equation*}
defined above, the integral
\begin{equation*}
\begin{split}
\int_{U}\int_{X^{1}_{y}\times...\times X^{m}_{y}}|u^{\otimes m}|^{2}e^{-\varphi_{m}}
\end{split}
\end{equation*}
is finite. In fact, we have
\begin{equation*}
\begin{split}
\int_{U}\int_{X^{1}_{y}\times...\times X^{m}_{y}}|u^{\otimes m}|^{2}e^{-\varphi_{m}}
&:=\int_{U}\int_{X^{1}_{y}\times...\times X^{m}_{y}}|u^{\otimes m}|^{2}e^{-\sum_{j}\varphi((y_{1},...,y_{n}),(x^{j}_{1},...,x^{j}_{l}))}\\
&=\int_{U}\prod^{m}_{j}\int_{X^{j}_{y}}|u_{j}|^{2}e^{-\varphi((y_{1},...,y_{n}),(x^{j}_{1},...,x^{j}_{l}))}\\
&\leqslant\prod^{m}_{j}(\int_{U}(\int_{X^{j}_{y}}|u_{j}|^{2}e^{-\varphi((y_{1},...,y_{n}),(x^{j}_{1},...,x^{j}_{l}))})^{m})^{1/m}.
\end{split}
\end{equation*}
The last inequality is due to H\"{o}lder's inequality. Since
\begin{equation*}
    \int_{U}(\int_{X^{j}_{y}}|u_{j}|^{2}e^{-\varphi((y_{1},...,y_{n}),(x^{j}_{1},...,x^{j}_{l}))})^{m}=\int_{U}(\int_{X_{y}}|u_{j}|^{2}e^{-\varphi((y_{1},...,y_{n}),(x_{1},...,x_{l}))})^{m}
\end{equation*}
for every $j$, it is finite by assumption.

Then we conclude that
\begin{equation*}
    \int_{f^{-1}_{m}(U)}|u^{\otimes m}|^{2}e^{-\varphi_{m}}
\end{equation*}
is also finite. Indeed, let $Z:=\{y\in Y; \varphi|_{X_{y}}\equiv-\infty\}$, which is a Zariski closed subset of $Y$. For any $y_{0}\in Z$, we can take a family of tubular neighborhoods $\{f^{-1}_{m}(U_{\varepsilon})\}$ of $X_{y_{0}}$ such that $f^{-1}_{m}(U)\setminus f^{-1}_{m}(U_{\varepsilon})\subset f^{-1}_{m}(Y\setminus Z)$, and we have
\begin{equation*}
\begin{split}
    \int_{f^{-1}_{m}(U)\setminus f^{-1}_{m}(U_{\varepsilon})}|u^{\otimes m}|^{2}e^{-\varphi_{m}}&=\int_{(U\setminus U_{\varepsilon})\times X^{1}_{y}\times\cdot\cdot\cdot\times X^{m}_{y}}|u^{\otimes m}|^{2}e^{-\varphi_{m}}\\
    &\leqslant\int_{U}\int_{X^{1}_{y}\times\cdot\cdot\cdot\times X^{m}_{y}}|u^{\otimes m}|^{2}e^{-\varphi_{m}}
\end{split}
\end{equation*}
for every $\varepsilon>0$. Therefore we conclude that
\begin{equation*}
    \int_{f^{-1}_{m}(U)}|u^{\otimes m}|^{2}e^{-\varphi_{m}}=\lim_{\varepsilon\rightarrow0}\int_{f^{-1}_{m}(U)\setminus f^{-1}_{m}(U_{\varepsilon})}|u^{\otimes m}|^{2}e^{-\varphi_{m}}<+\infty.
\end{equation*}
\end{proof}
\end{lemma}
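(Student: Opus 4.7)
The plan is to reduce the finiteness of $\int_{f_m^{-1}(U)} |u^{\otimes m}|^2 e^{-\varphi_m}$ to the hypothesis $\psi_m\in L^1_{\mathrm{loc}}(Y)$ by exploiting the local product structure of the fibre product and then estimating via Fubini's theorem and H\"{o}lder's inequality. Shrinking $U$ if necessary, smoothness of $f$ gives, by the holomorphic implicit function theorem, coordinates $(y_1,\ldots,y_n,x_1,\ldots,x_l)$ on $f^{-1}(U)$ in which $f$ is the projection $(y,x)\mapsto y$, so that iterating the fibre product construction yields coordinates $(y,x^1,\ldots,x^m)$ on $f_m^{-1}(U)\cong U\times X_y^1\times\cdots\times X_y^m$. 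In these coordinates the weight of $h_m$ separates as $\varphi_m(y,x^1,\ldots,x^m)=\sum_{j=1}^m \varphi(y,x^j)$, and the section $u^{\otimes m}$ factors as the external tensor product of the pulled-back $u_j$, so that $|u^{\otimes m}|^2\, e^{-\varphi_m}$ splits into a product of one-fibre factors up to the common $y$-volume form.

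Next I would apply Tonelli/Fubini to the nonnegative integrand, obtaining
\begin{equation*}
\int_{f_m^{-1}(U)} |u^{\otimes m}|^2 e^{-\varphi_m} \;=\; \int_U \prod_{j=1}^m \Phi_j(y)\, dV(y), \qquad \Phi_j(y):=\int_{X_y} |u_j|^2 e^{-\varphi(y,\cdot)},
\end{equation*}
where $dV$ is any smooth volume form on $U$. Then H\"{o}lder's inequality with $m$ factors of exponent $m$ gives
\begin{equation*}
\int_U \prod_{j=1}^m \Phi_j\, dV \;\leqslant\; \prod_{j=1}^m \Bigl(\int_U \Phi_j^m\, dV\Bigr)^{1/m},
\end{equation*}
and by the hypothesis applied with $l=m$, each $\Phi_j^m=\psi_m$ (for the section $u_j$) lies in $L^1_{\mathrm{loc}}(Y)$, so each factor on the right is finite after shrinking $U$ to be relatively compact.

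A small care point, anticipated in the statement, is the locus $Z=\{y\in Y:\varphi|_{X_y}\equiv-\infty\}$ on which the fibre integrals $\Phi_j(y)$ could be delicate to interpret pointwise. Since $(L,h)$ is pseudo-effective, $Z$ is contained in a proper analytic subset of $Y$ and thus has Lebesgue measure zero, so the Tonelli identity above is valid verbatim; alternatively one exhausts $f_m^{-1}(U)$ by the open sets $f_m^{-1}(U\setminus U_\varepsilon)$ for tubular neighborhoods $U_\varepsilon$ of $Z\cap U$ and invokes monotone convergence, which is exactly the argument the author alludes to in the statement.

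The main obstacle, in my view, is really bookkeeping rather than analysis: one must verify that the factorization $|u^{\otimes m}|^2 e^{-\varphi_m}=\prod_j |u_j|^2 e^{-\varphi(y,x^j)}$ is intrinsic and that the relative canonical bundle $K_{X_m/Y}$ produces the correct volume form on the product of fibres so that the Fubini split above holds without spurious Jacobian factors. This is where Lemma \ref{l3}, together with the identity $K_{X_m/Y}=\bigotimes_{j}\pi_j^{\ast} K_{X/Y}$ (which is an immediate consequence of smoothness of $f_m$ and the chain of projections), is essential. Once this identification is in place, H\"{o}lder finishes the proof in a single line.
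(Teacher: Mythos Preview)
Your proposal is correct and follows essentially the same route as the paper's own proof: set up product coordinates on $f_m^{-1}(U)$, factor $|u^{\otimes m}|^2 e^{-\varphi_m}$ into a product over the $m$ fibre copies, apply Fubini and then H\"{o}lder with exponents $(m,\ldots,m)$ to reduce to the hypothesis $\psi_m\in L^1_{\mathrm{loc}}$, and handle the degenerate locus $Z$ by exhaustion with tubular neighborhoods. Your additional remark about checking that $K_{X_m/Y}=\bigotimes_j \pi_j^{\ast}K_{X/Y}$ so that no Jacobian factors appear is a point the paper leaves implicit, but otherwise the arguments coincide step for step.
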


Now we turn to Theorem \ref{t2}.

\begin{proof}[Proof of Theorem \ref{t2}]
Consider the $m$-fold fibre product $f_{m}:X_{m}=X\times_{Y}\cdot\cdot\cdot\times_{Y}X\rightarrow Y$. If we denote
\begin{equation*}
    L_{m}:=(p^{m}_{1}\otimes p^{m-1}_{1}p^{m}_{2}\otimes p^{m-2}_{1}p^{m-1}_{2}p^{m}_{2}\otimes\cdot\cdot\cdot\otimes p^{1}_{2}p^{2}_{2}...p^{m}_{2})^{\ast}(L),
\end{equation*}
by Lemma \ref{l3} we have
\begin{equation*}
   (f_{\ast}(K_{X/Y}\otimes L))^{\otimes m}=(f_{m})_{\ast}(K_{X_{m}/Y}\otimes L_{m}).
\end{equation*}
In order to apply Corollary \ref{c1}, we should analysis the singularity of the pseudo-effective metric of $L_{m}$. Indeed, the metric $h$ of $L$ induces a natural metric
\begin{equation*}
   h_{m}=(p^{m}_{1}\otimes p^{m-1}_{1}p^{m}_{2}\otimes p^{m-2}_{1}p^{m-1}_{2}p^{m}_{2}\otimes...\otimes p^{1}_{2}p^{2}_{2}...p^{m}_{2})^{\ast}(h)
\end{equation*}
of $L_{m}$ with positive curvature current. Lemma \ref{l3} implies that
\begin{equation*}
    \mathscr{I}(h_{m})\subset(p^{m}_{1}\otimes p^{m-1}_{1}p^{m}_{2}\otimes p^{m-2}_{1}p^{m-1}_{2}p^{m}_{2}\otimes\cdot\cdot\cdot\otimes p^{1}_{2}p^{2}_{2}...p^{m}_{2})^{\ast}\mathscr{I}(h),
\end{equation*}
so we have
\begin{equation*}
\begin{split}
    &(f_{m})_{\ast}(K_{X_{m}/Y}\otimes L_{m}\otimes\mathscr{I}(h_{m}))\\
    \subset&(f_{m})_{\ast}(K_{X_{m}/Y}\otimes L_{m}\otimes(p^{m}_{1}\otimes p^{m-1}_{1}p^{m}_{2}\otimes\cdot\cdot\cdot\otimes p^{1}_{2}p^{2}_{2}...p^{m}_{2})^{\ast}\mathscr{I}(h))\\
    =&f_{\ast}(K_{X/Y}\otimes L\otimes\mathscr{I}(h))^{\otimes m}.
\end{split}
\end{equation*}
While Lemma \ref{l4} says that the opposite direction holds under the assumption that
\begin{equation*}
    \psi_{l}(y)=(\int_{X_{y}}|u|^{2}_{h})^{l}\in L^{1}_{\mathrm{loc}}(Y)
\end{equation*}
for any $l\in\mathbb{Z}_{+}$ and local section $u$ of $f_{\ast}(K_{X/Y}\otimes L\otimes\mathscr{I}(h))$. So we actually have
\begin{equation*}
    \mathcal{E}^{\otimes m}=f_{\ast}(K_{X/Y}\otimes L\otimes\mathscr{I}(h))^{\otimes m}=(f_{m})_{\ast}(K_{X_{m}/Y}\otimes L_{m}\otimes\mathscr{I}(h_{m})).
\end{equation*}

Then we fix a very ample line bundle $H$ over $Y$ and let $A=K_{Y}\otimes H^{\otimes(n+1)}$ with $n=\dim Y$. Applying Corollary \ref{c1} to the fibration $f_{m}$ and the direct image $(f_{m})_{\ast}(K_{X_{m}/Y}\otimes L_{m}\otimes\mathscr{I}(h_{m}))$, we deduce that the sheaf $\mathcal{E}^{\otimes m}\otimes A$ is generated by its global sections.

Therefore the vector bundle $S^{m}\mathcal{E}\otimes A$, being a quotient of $\mathcal{E}^{\otimes m}\otimes A$, is globally generated, too. Consider $\pi:\mathbb{P}(\mathcal{E}^{\ast})\rightarrow Y$. Note that we have a surjective morphism
\begin{equation*}
    \pi^{\ast}\pi_{\ast}\mathcal{O}_{\mathcal{E}}(m)\cong\pi^{\ast}(S^{m}(\mathcal{E}))\rightarrow\mathcal{O}_{\mathcal{E}}(m),
\end{equation*}
and we thus deduce that $\mathcal{O}_{\mathcal{E}}(m)\otimes\pi^{\ast}A$ is globally generated, hence nef, for every $m\geqslant1$. This implies that $\mathcal{O}_{\mathcal{E}}(1)$ is nef, that is, $\mathcal{E}$ is nef.
\end{proof}

\section{Two examples}
In this section, we will present two interesting examples.

Firstly, we do some general computation. Since the crucial thing is the local integrability of the function
\begin{equation*}
    \psi_{l}(y)=(\int_{X_{y}}|u|^{2}e^{-\varphi})^{l}=e^{l\log\int_{X_{y}}|u|^{2}e^{-\varphi}}
\end{equation*}
on $Y$, we will calculate the $(1,1)$-current
\begin{equation*}
    -dd^{c}_{y}\log\int_{X_{y}}|u|^{2}e^{-\varphi}.
\end{equation*}
Here $dd^{c}_{y}$ means to take the derivative with respect to $y$. We focus on a local coordinate ball $U$ of $Y$. Take an arbitrary $u$ in
\begin{equation*}
   \Gamma(U,f_{\ast}(K_{X/Y}\otimes L\otimes\mathscr{I}(\varphi))),
\end{equation*}
then $-\log\int_{X_{y}}|u|^{2}e^{-\varphi}$ is a function on (a subset $U$ of) $Y$ not identically equaling to $\infty$.

We begin with a general setting. Consider the trivial vector bundle $E:=f_{\ast}(K_{X/Y}\otimes L)$ over $U$ with fibre $H^{0}(X_{y},K_{X_{y}}\otimes L)$. $\mathrm{rank}E=h^{0}(X_{y},K_{X_{y}}\otimes L)=r$. Now we do some reduction. Firstly, $\varphi$ can be assumed to be smooth by approximation. Then
\begin{equation*}
    \|u\|^{2}_{y}=\int_{X_{y}}|u|^{2}e^{-\varphi}.
\end{equation*}
is the $L^{2}$-metric of $E$. We furthermore assume that $\dim Y=1$ to ease the notation. As is proved in \cite{Ber09}, the curvature associated with this $L^{2}$-metric can be written as
\begin{equation}\label{e7}
    \langle\Theta^{E}u,u\rangle=(\int_{X_{y}}dd^{c}\varphi\wedge u\wedge\bar{u}e^{-\varphi})-\|P_{\perp}\mu\|^{2}-c_{n}\int_{X_{y}}\eta\wedge\bar{\eta}e^{-\varphi}.
\end{equation}
Here $\mu$ is given by $\partial^{\varphi}u=dy\wedge\mu$ and $\eta$ by $\bar{\partial}u=dy\wedge\eta$. $P_{\perp}$  is the orthogonal projection on the orthogonal complement of holomorphic forms. Moreover, $\langle\Theta^{E}u,u\rangle$ is positive since $\varphi$ is plurisubharmonic.

Now let $Z$ be the total space of $E$ and $\pi:Z\rightarrow U$ be the projection. Notice that
\begin{equation*}
    \log(\int_{X_{y}}|u|^{2}e^{-\varphi})=\psi(u,y)
\end{equation*}
can be seen as a function on $Z$, which is log-homogeneous with respect to $u$. In other word, we see $u$ as an independent variable instead of a function of $y$. Since $\|u\|^{2}$ is a Hermitian metric, the Levi form of $\psi$ along $Z_{y}$ is strictly positive. (Or we can say that the Finsler metric $\psi$ induced by $\|u\|^{2}$ is strongly pseudoconvex.)

We now expand $dd^{c}_{u,y}\psi$ on the $\pi^{-1}(U)$ (not on $U$) as
\begin{equation*}
\begin{split}
    &dd^{c}_{u,y}(\log\int_{X_{y}}|u|^{2}e^{-\varphi})=\\
    &i(g_{1\bar{1}}dy\wedge d\bar{y}+g_{1\bar{\beta}}dy\wedge d\bar{u}^{\beta}+g_{\alpha\bar{1}}du^{\alpha}\wedge d\bar{y}+g_{\alpha\bar{\beta}}du^{\alpha}\wedge d\bar{u}^{\beta}).
\end{split}
\end{equation*}
Here $(u^{\alpha})$ is the vertical coordinate of $\pi:Z\rightarrow U$ and $y$ the horizontal one. $dd^{c}_{u,y}$ means to take the derivative with respect to $(u^{\alpha})$ and $y$. Since $\psi$ is strongly pseudoconvex, the matrix $(g_{\alpha\bar{\beta}})$ is invertible. So we can define the coformal basis by
\begin{equation*}
   \{\delta u^{\alpha}=du^{\alpha}+g^{\Bar{\beta}\alpha}g_{1\Bar{\beta}}dy,dy\}.
\end{equation*}

It is proved by Kobayashi \cite{Kob75} that on $\pi^{-1}(U)$ (not on $U$), we have
\begin{equation}\label{e8}
    dd^{c}_{u,y}\psi=dd^{c}_{u,y}(\log\int_{X_{y}}|u|^{2}e^{-\varphi})=-\Psi+\omega_{V},
\end{equation}
where $\Psi=ig_{1\bar{1}}dy\wedge d\bar{y}=i\Theta^{E}_{\alpha\Bar{\beta}1\Bar{1}}\frac{u^{\alpha}\Bar{u}^{\beta}}{\int_{X_{y}}|u|^{2}e^{-\varphi}}dy\wedge d\Bar{y}$ and
\begin{equation*}
    \omega_{V}=i\frac{\partial^{2}(\log\int_{X_{y}}|u|^{2}e^{-\varphi})}{\partial u^{\alpha}\partial\Bar{u}^{\beta}}\delta u^{\alpha}\wedge\delta\Bar{u}^{\beta}.
\end{equation*}
Obviously, $\Psi$ and $\omega_{V}|_{Z_{y}}$ are both positive.

We remark here that if (\ref{e8}) is pulled back to $U$ through a holomorphic section $u:U\rightarrow\pi^{-1}(U)$, we get
\begin{equation}\label{e9}
\begin{split}
    -dd^{c}_{y}(\log\int_{X_{y}}|u|^{2}e^{-\varphi})&=u^{\ast}\Psi-u^{\ast}\omega_{V}.
\end{split}
\end{equation}
It is just the standard formula
\begin{equation*}
\begin{split}
    -\partial\bar{\partial}(\log\|u\|^{2})&=-(\frac{\partial\bar{\partial}\|u\|^{2}}{\|u\|^{2}}-i\frac{\langle D^{1,0}u,u\rangle\langle u,D^{1,0}u\rangle}{\|u\|^{4}})\\
    &=\frac{\langle\Theta^{E}u,u\rangle-\langle D^{1,0}u,D^{1,0}u\rangle}{\|u\|^{2}}+\frac{\langle D^{1,0}u,u\rangle\langle u,D^{1,0}u\rangle}{\|u\|^{4}},
\end{split}
\end{equation*}
where $D^{1,0}$ is the $(1,0)$-part of the Chern connection associated to $\|u\|^{2}$.

Now we focus on a special case. Assume that $-K_{X/Y}\geqslant0$ and take $L=-K_{X/Y}$. In this situation, $H^{0}(X_{y},K_{X_{y}}\otimes L)$ only has a trivial element $u=1$, so $E=f_{\ast}(K_{X/Y}\otimes L)$ is actually a line bundle. Moreover, $-dd^{c}_{y}(\log\int_{X_{y}}e^{-\varphi}dV_{y})$, which is a closed positive $(1,1)$-form on $U$, is exactly the curvature associated with the $L^{2}$-metric of $E$. Here we use the classic equivalence between the curvature $\Theta^{E}\in A^{1,1}(\mathrm{End}(E))$ and the first Chern form $c_{1}(E)\in H^{1,1}(Y)$ for a line bundle $E$. Therefore (\ref{e9}) can be simplified as
\begin{equation*}
\begin{split}
    -dd^{c}_{y}(\log\int_{X_{y}}e^{-\varphi}dV_{y})&=\frac{\int_{X_{y}}dd^{c}_{X}\varphi\wedge e^{-\varphi}dV_{y}}{\int_{X_{y}}e^{-\varphi}dV_{y}}.
\end{split}
\end{equation*}
Notice that $\mu$ and $\eta$ in (\ref{e7}) equal to zero here. The singular case comes from the standard approximation. In particular, $-\log\int_{X_{y}}e^{-\varphi}$ is a subharmonic function on $U$ provided that $\varphi$ is a plurisubharmonic function. This result has already been proved in \cite{Ber15} as a complex counterpart of the functional version of the Brunn--Monkowski inequality.

Finally, if the Lelong number of $-\log\int_{X_{y}}e^{-\varphi}$, which is well-defined, equals to zero, then $(\int_{X_{y}}e^{-\varphi})^{l}$ must be locally integrable for all $l$ by \cite{Sko72a}. We summarize the discussion above to such a definition:

\begin{definition}[Lelong number along fibre]
Let $f:X\rightarrow Y$ be a fibration between two projective manifolds $X$ and $Y$. $\varphi$ is the weight function of a pseudo-effective metric of $-K_{X/Y}$ over $X$. Let $y\in Y$ be an arbitrary point. Then the Lelong number along fibre $X_{y}$ of $\varphi$ is defined to be
\begin{equation*}
  \nu(\varphi, X_{y}):=\nu(-\log\int_{X_{y}}e^{-\varphi}, y).
\end{equation*}
\end{definition}

We list a few basic properties.
\begin{proposition}\label{p1}
Let $f:X\rightarrow Y$ be the fibration considered above. $\dim Y=n$.
\begin{enumerate}
  \item For every plurisubharmonic function $\varphi$, the Lelong number along fibre $\nu(\varphi,X_{y})$ always exists.
  \item $\nu(\varphi,X_{y})=\sup\{\gamma|\int_{X_{y}}e^{-\varphi}\geqslant\frac{C}{|z-y|^{\gamma}}\textrm{ for some constant C at }y\}$.
  \item If $\nu(\varphi,X_{y})<1$, then $\int_{X_{y}}e^{-\varphi}$ is integrable in a neighborhood of $y$.
  \item If $\nu(\varphi,X_{y})\geqslant n+s$ for some integer $s\geqslant0$, then $\int_{X_{y}}e^{-\varphi}\geqslant C|z-y|^{-2n-2s}$ in a neighborhood of $y$.
\end{enumerate}
\begin{proof}
(1) has been discussed before.

(2) is a direct consequence of \cite{Lel57}.

(3) If $U$ is a neighborhood of $y$, we have
\begin{equation*}
\begin{split}
    \int_{U}\int_{X_{y}}e^{-\varphi}&=\int_{U}e^{-(-\log\int_{X_{y}}e^{-\varphi})}.
\end{split}
\end{equation*}
Since the Lelong number of $-\log\int_{X_{y}}e^{-\varphi}$ is less than one, it is a quick consequence of \cite{Sko72a} that $\int_{U}\int_{X_{y}}e^{-\varphi}<\infty$.

(4) is also a quick consequence of \cite{Sko72a}.
\end{proof}
\end{proposition}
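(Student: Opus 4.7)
The entire proposition rests on the observation made just before the definition: the function $\psi(z) := -\log\int_{X_{z}}e^{-\varphi}$ is plurisubharmonic on $Y$ (Berndtsson's complex Brunn--Minkowski theorem, \cite{Ber15}), and by the definition we have $\nu(\varphi,X_{y}) = \nu(\psi,y)$. Each of the four assertions therefore reduces to a classical fact about the Lelong number of the psh function $\psi$, and I would prove the proposition by simply unwinding the definition and citing the appropriate classical results.

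Part (1) is immediate: the Lelong number of any psh function at any point is well-defined as the monotone limit of sphere-averages (or equivalently of the mass of $dd^{c}\psi$ on shrinking balls), so the pshness of $\psi$ is all that is needed. For part (2), I would apply Lelong's characterization from \cite{Lel57}, namely $\nu(\psi,y) = \sup\{\gamma : \psi(z) \leqslant \gamma \log|z-y| + O(1)\textrm{ near }y\}$, and then exponentiate the inequality inside the supremum (using $\log|z-y|<0$) to rewrite it as $e^{-\psi(z)} = \int_{X_{z}} e^{-\varphi} \geqslant C|z-y|^{-\gamma}$, which is precisely the formula claimed. Parts (3) and (4) then follow from Skoda's $L^{2}$ theory \cite{Sko72a}: the integrability theorem says that $\nu(\psi,y)<1$ implies local integrability of $e^{-\psi}$ near $y$, giving (3), while Skoda's quantitative lower bound for psh functions with large Lelong number yields, under the assumption $\nu(\psi,y)\geqslant n+s$, the pointwise estimate $e^{-\psi(z)} \geqslant C|z-y|^{-2n-2s}$, which is (4).

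The only technical point to watch is that $\psi$ may take the value $-\infty$ on the Zariski closed subset $Z = \{y \in Y : \varphi|_{X_{y}} \equiv -\infty\}$ appearing in the proof of Lemma \ref{l4}, but this is simply the standard polar locus of a psh function and does not interfere with the Lelong-number analysis. Consequently I do not anticipate any substantial obstacle here: once the pshness of $\psi$ is granted from the preceding discussion, each of (1)--(4) is essentially a one-line consequence of the classical Lelong and Skoda theorems.
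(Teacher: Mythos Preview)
Your proposal is correct and matches the paper's proof essentially line for line: both reduce everything to the plurisubharmonicity of $\psi(z)=-\log\int_{X_{z}}e^{-\varphi}$ established in the preceding discussion, then cite \cite{Lel57} for (2) and \cite{Sko72a} for (3) and (4). Your write-up is in fact slightly more detailed than the paper's (you spell out the exponentiation step for (2) and flag the polar-locus issue), but the underlying argument is identical.
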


The first example is as follows.

\begin{example}
Let $f:X\rightarrow Y$ be a smooth fibration between two projective manifolds $X$ and $Y$. Assume that $(-K_{X/Y},\varphi)$ is pseudo-effective with $\nu(\varphi,X_{y})=0$ for all $y\in Y$. Then $f_{\ast}(\mathscr{I}(\varphi))$ is nef (as a torsion-free coherent sheaf) by Theorem \ref{t2}.

In fact, for any local coordinate ball $U$,
\begin{equation*}
    \int_{U}\int_{X_{y}}e^{-\varphi}<\infty
\end{equation*}
since $\nu(\varphi,X_{y})=0$. So we actually have $f_{\ast}(\mathscr{I}(\varphi))=f_{\ast}(\mathcal{O}_{X})=\mathcal{O}_{Y}$.
\end{example}

Another example comes from the Hermitian--Einstein theory.
\begin{example}\label{ex1}
Let $E$ be a stable vector bundle of rank $r$ over a compact Riemann surface $Y$ of genus $\geqslant2$. Then it is proved in \cite{NaS65} that there exists a coordinate chart $\{U_{i}\}$ of $Y$ such that the transition matrices can be written in the form $g_{ij}=f_{ij}U_{ij}$, where $f_{ij}$ is a scalar function and $U_{ij}$ is a unitary matrix on $U_{i}\cap U_{j}$.

Assume that $(\det E,\{h_{i}\})$ is pseudo-effective. Then $\{h^{1/r}_{i}I_{r}\}$ defines a (singular) Hermitian metric $h$ on $E$. Here $I_{r}$ is the identity matrix of rank $r$. If we denote the weight function of $\{h_{i}\}$ by $\phi$, the associated curvature of $h$ is given by $\frac{1}{r}h^{1/r}_{i}I_{r}dd^{c}\phi$, which is semi-positive in the sense of Griffiths. Take $\psi$ to be the weight function of the corresponded Finsler metric of $h$ on $\mathcal{O}_{E}(1)$. Let
\begin{equation*}
\begin{split}
    j:E^{\ast}&\rightarrow\mathbb{P}(E^{\ast})\\
    (y,(u^{\alpha}))&\mapsto(y,[u^{\alpha}])
\end{split}
\end{equation*}
be the natural morphism and $(y,(\frac{u^{\alpha}}{u^{A}}),\xi)$ be the local coordinate of $\mathcal{O}_{E}(1)$. Here $(\frac{u^{\alpha}}{u^{A}})$ refers to the local coordinate along fibre on the open set $\{u^{A}\neq0\}$. We furthermore let $\pi:X:=\mathbb{P}(E^{\ast})\rightarrow Y$ be the projection. The formula (\ref{e8}) then gives that
\begin{equation*}
    \begin{split}
        dd^{c}\psi&=dd^{c}\log h^{\ast}+dd^{c}\log|\xi|^{2}\\
                  &=j_{\ast}(-\Psi+\omega_{V})+[\{\xi=0\}]\\
                  &=j_{\ast}(-i\Theta^{E^{\ast}}_{\alpha\bar{\beta}i\bar{j}}\frac{u^{\alpha}\bar{u}^{\beta}}{h^{\ast}}dy^{i}\wedge d\bar{y}^{j}+i\frac{\partial^{2}(\log h^{\ast})}{\partial u^{\alpha}\partial\bar{u}^{\beta}}\delta u^{\alpha}\wedge\delta\bar{u}^{\beta})+[\{\xi=0\}]\\
                  &=\frac{1}{r}\pi^{\ast}(h^{-1/r}_{i}dd^{c}\phi\frac{\sum_{\alpha}|u^{\alpha}/u^{A}|^{2}}{h^{-1/r}_{i}\sum_{\alpha}|u^{\alpha}/u^{A}|^{2}})+i\frac{d\frac{u^{\alpha}}{u^{A}}\wedge d\overline{\frac{u^{\alpha}}{u^{A}}}}{(1+\sum|\frac{u^{\alpha}}{u^{A}}|^{2})^{2}}\\
                  &=\frac{1}{r}dd^{c}\pi^{\ast}\phi+i\frac{d\frac{u^{\alpha}}{u^{A}}\wedge d\overline{\frac{u^{\alpha}}{u^{A}}}}{(1+\sum|\frac{u^{\alpha}}{u^{A}}|^{2})^{2}}.
    \end{split}
\end{equation*}
$[\{\xi=0\}]$ is the current of integration of the zero section. In the forth equality, we use the fact that $\xi$ has the same transition function as $\frac{1}{u^{A}}$. Obviously $dd^{c}\psi$ is a closed positive $(1,1)$-current. It means that $(\mathcal{O}_{E}(1),\psi)$ is pseudo-effective. We endow the line bundle $\mathcal{O}_{E}(r+1)\otimes\pi^{\ast}\det E^{\ast}$ with the metric $\varphi=(r+1)\psi-\pi^{\ast}\phi$. Since
\begin{equation*}
\begin{split}
    dd^{c}\varphi&=\frac{r+1}{r}dd^{c}\pi^{\ast}\phi-dd^{c}\pi^{\ast}\phi+i(r+1)\frac{d\frac              {u^{\alpha}}{u^{A}}\wedge       d\overline{\frac{u^{\alpha}}{u^{A}}}}{(1+\sum|\frac{u^{\alpha}}{u^{A}}|^{2})^{2}}\\
                 &=\frac{1}{r}dd^{c}\pi^{\ast}\phi+i(r+1)\frac{d\frac{u^{\alpha}}{u^{A}}\wedge d\overline{\frac{u^{\alpha}}{u^{A}}}}{(1+\sum|\frac{u^{\alpha}}{u^{A}}|^{2})^{2}}\geqslant0,
\end{split}
\end{equation*}
the line bundle $(\mathcal{O}_{E}(r+1)\otimes\pi^{\ast}\det E^{\ast},\varphi)$ is also pseudo-effective. Moreover, if we treat an $\mathcal{O}_{E}(r+1)\otimes\pi^{\ast}\det E^{\ast}$-valued $(r-1,0)$-form $\eta$ as a (local) section $\xi$ of $\mathcal{O}_{E}(1)$ via the following isomorphism (which is even an isometry here)
\begin{equation*}
   \mathcal{O}_{E}(1)=K_{X/Y}\otimes\mathcal{O}_{E}(r+1)\otimes\pi^{\ast}\det E^{\ast},
\end{equation*}
we have
\begin{equation*}
    \int_{X_{y}}|\eta|^{2}e^{-\varphi}=\int_{X_{y}}|\xi|^{2}e^{-\frac{1}{r}\pi^{\ast}\phi}=h^{1/r}_{i}\sum|u^{\alpha}|^{2}.
\end{equation*}
The last equality is due to the isomorphism
\begin{equation*}
    \begin{split}
        \pi_{\ast}\mathcal{O}_{E}(1)&\rightarrow E\\
        \xi &\mapsto (u^{\alpha}).
    \end{split}
\end{equation*}
It means that $\int_{X_{y}}|\eta|^{2}e^{-\varphi}$ is nothing but the norm of the section $(u^{\alpha})$ of $E$ corresponded to $\eta$ with respect to the singular Hermitian metric $h$ defined at the beginning. In other word, for any positive integer $l$ and local section $\eta$ of
\begin{equation*}
    \pi_{\ast}(K_{X/Y}\otimes\mathcal{O}_{E}(r+1)\otimes\pi^{\ast}\det E^{\ast}),
\end{equation*}
$(\int_{X_{y}}|\eta|^{2}e^{-\varphi})^{l}\in L^{1}_{\mathrm{loc}}(Y)$ if and only if $h^{l/r}_{i}\in L^{1}_{\mathrm{loc}}(Y)$.

Apply Theorem \ref{t2} with the fibration $\pi:X=\mathbb{P}(E^{\ast})\rightarrow Y$ and the pseudo-effective line bundle $L=\mathcal{O}_{E}(r+1)\otimes\pi^{\ast}\det E^{\ast}$ over $X$, we have the following conclusion.

\textbf{Conclusion}: Let $E$ be a stable vector bundle of rank $r$ over a compact Riemann surface $Y$ of genus $\geqslant2$. Assume that $(\det E,\{h_{i}\})$ is pseudo-effective. If $\mathscr{I}(h^{l}_{i})=\mathcal{O}_{Y}$ for all positive integer $l$, then $E$ is nef.
\end{example}

Based on this example, we can prove Theorem \ref{t3}.
\begin{proof}[Proof of Theorem \ref{t3}]
The case that $g(Y)\geqslant2$ has been shown in Example \ref{ex1}.

$g(Y)=0$ is simple. In fact, $Y=\mathbb{P}^{1}$ at this time. The only stable bundles are $\mathcal{O}(a)$ for all the integers $a$. So if $\det\mathcal{O}(a)=\mathcal{O}(a)$ is pseudo-effective, $a\geqslant0$. So $\mathcal{O}(a)$ is nef.

$g(Y)=1$ is also not complicated. In this situation, we have
\begin{theorem}[Atiyah--Tu,\cite{Ati57,Tu93}]
Let $Y$ be an elliptic curve. Denote the moduli space of stable vector bundles of rank $r$ and degree $d$ by $\mathcal{M}_{Y}(r,d)$. Then the determine map
\begin{equation*}
    \det:\mathcal{M}_{Y}(r,d)\rightarrow\mathrm{Pic}^{d}_{Y}
\end{equation*}
is an isomorphism of complex analytic manifolds of dimension 1. Here $\mathrm{Pic}^{d}_{Y}$ refers to the set of all the equivalence classes of the line bundle with degree $d$ in $\mathrm{Pic}(Y)$.
\end{theorem}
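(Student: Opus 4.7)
The plan is to combine Atiyah's structure theory for vector bundles on elliptic curves with the Fourier--Mukai transform to reduce the statement to the tautological case of line bundles.

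Preliminary observations: by Atiyah's classification, stable bundles of rank $r$ and degree $d$ on an elliptic curve exist precisely when $\gcd(r,d) = 1$, because in the opposite case every semistable bundle is an iterated extension of stable bundles of strictly smaller rank but the same slope, producing a proper subbundle that destroys stability. Henceforth assume $\gcd(r,d) = 1$. Serre duality and a standard deformation-theoretic count yield $\dim \mathcal{M}_Y(r,d) = r^2(g-1) + 1 = 1$ since $g(Y) = 1$, matching the dimension of $\mathrm{Pic}^{d}_{Y} \cong Y$; moreover, both sides are smooth, connected, and non-empty.

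The crux of the argument is to factor the determinant through the Fourier--Mukai equivalence $\Phi_{\mathcal{P}}: D^b(Y) \to D^b(\hat Y)$ with kernel the normalized Poincar\'{e} bundle $\mathcal{P}$ (identifying $\hat Y$ with $Y$). On Mukai vectors $\Phi_{\mathcal{P}}$ acts as $(r,d) \mapsto (d, -r)$, while twisting by a degree-one line bundle acts as $(r,d) \mapsto (r, d+r)$; together these generate the $SL(2,\mathbb{Z})$-action on primitive vectors in $\mathbb{Z}^2$. Since $\gcd(r,d) = 1$, the Euclidean algorithm produces a finite sequence of such operations that brings $(r,d)$ to $(1,0)$. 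Each step induces a biholomorphism between the corresponding moduli spaces of slope-stable sheaves (Fourier--Mukai preserves stability in the coprime-rank-degree range; twisting is tautological), and the terminal moduli space $\mathcal{M}_Y(1,0) = \mathrm{Pic}^{0}_{Y}$ carries the determinant as the identity, manifestly an isomorphism.

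As a cross-check and to handle surjectivity independently, I would exploit the $\mathrm{Pic}^{0}_{Y}$-action $E \mapsto E \otimes L$ on $\mathcal{M}_Y(r,d)$: it preserves stability and sends $\det(E)$ to $\det(E) \otimes L^{\otimes r}$. Since multiplication-by-$r$ is a surjective isogeny of the elliptic curve $Y \cong \mathrm{Pic}^{0}_{Y}$, the orbit under this action of any fixed reference stable bundle $F_{r,d}$ (whose existence is Atiyah's contribution) has image under $\det$ covering all of $\mathrm{Pic}^{d}_{Y}$, giving surjectivity directly. Injectivity then follows from matching dimensions and the fact that $\det$ is a surjective morphism of smooth connected curves whose inverse is continuously described by the Fourier--Mukai unwinding.

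The main obstacle is verifying that the biholomorphism produced by the Fourier--Mukai chain really coincides with the determinant map, rather than with the determinant composed with a nontrivial automorphism (translation) of $\mathrm{Pic}^{d}_{Y}$. This compatibility requires tracking how $\det$ transforms under $\Phi_{\mathcal{P}}$ and under each twist — a computation that is conceptually straightforward but bookkeeping-heavy, and constitutes the technical heart of both Atiyah's and Tu's original proofs.
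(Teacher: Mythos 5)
This theorem is not proved in the paper: it is imported verbatim from Atiyah and Tu, and the paper only sketches (in the paragraph following the statement) the mechanism of the bijection, namely Atiyah's Euclidean algorithm implemented by twisting with a degree-$r$ line bundle and by the extensions $0\rightarrow I_{d}\rightarrow E\rightarrow E^{\prime}\rightarrow 0$. Your Fourier--Mukai reduction is a legitimate modern repackaging of that same Euclidean algorithm, and your existence, dimension-count, and surjectivity observations (via the $\mathrm{Pic}^{0}_{Y}$-action and surjectivity of multiplication by $r$) are correct.

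However, there is a genuine gap at the decisive step: injectivity of $\det$. Your stated argument --- ``matching dimensions and the fact that $\det$ is a surjective morphism of smooth connected curves'' --- does not suffice, because a surjective holomorphic map between two elliptic curves can perfectly well be an isogeny of degree greater than one; equal dimension plus surjectivity rules out nothing. The only other injectivity input you offer is that the inverse is ``continuously described by the Fourier--Mukai unwinding,'' but this presupposes exactly the compatibility you yourself flag as unverified, namely that the composite of the $SL(2,\mathbb{Z})$-chain of equivalences agrees with $\det$ rather than with $\det$ post-composed with a translation or an isogeny. That compatibility (equivalently, Atiyah's facts that $\mathrm{Pic}^{0}_{Y}$ acts transitively on $\mathcal{M}_{Y}(r,d)$ by twisting and that the stabilizer of the distinguished bundle $F_{r,d}$ is precisely the $r$-torsion subgroup, so that $E\otimes L\cong E$ forces $L^{\otimes r}\cong\mathcal{O}_{Y}$) is the entire content of the theorem beyond the abstract isomorphism $\mathcal{M}_{Y}(r,d)\cong Y$, and it is left undone. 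A secondary, smaller gap: the assertion that the Fourier--Mukai equivalence preserves stability in the coprime range is true but nontrivial (one must also handle the WIT condition so that sheaves go to shifted sheaves), and it is used without justification.
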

The procedure of this bijective map only involves a line bundle $A$ of degree $r$ and the trivial vector bundle $I_{l}$ of rank $l$. First, let $d=\mathrm{deg}E$. Since $E\mapsto A\otimes E$ is a bijiection from the set of vector bundles with rank $r$ and degree $d$ to the set of vector bundles with rank $r$ and degree $d+r$, one can always assume that $\deg E<r$. Then $E$ can be written as
\begin{equation}\label{e10}
    0\rightarrow I_{d}\rightarrow E\rightarrow E^{\prime}\rightarrow0
\end{equation}
by \cite{Ati57}. So $E^{\prime}$ is a vector bundle with rank $r-d$ and degree $d$. Moreover, $E^{\prime}$ is nef if and only if $E$ is by \cite{DPS94}. Once more, we can reduce it to the case that $\deg E^{\prime}<r-d$ and get an exact sequence similar with (\ref{e10}). Repeating this program, finally we get a vector bundle $E^{\prime\prime}$ with rank $1$ and degree $0$. (Remember that the highest common factor $(r,d)=1$ since $E$ is stable.) Since $E^{\prime\prime}$ is nef, so will be $E$. The proof is complete.
\end{proof}

\section{Appendix}
In this section we will prove a general version of the nefness of the direct images via the Ohsawa--Takegoshi theorem in order to illustrate the difference between our method and the analytic method used in \cite{Ber08,PT14}. This result is more or less included in those papers.

\begin{theorem}\label{t7}
Let $f:X\rightarrow Y$ be a fibration between two projective manifolds $X$ and $Y$. $(L,h)$ is a pseudo-effective line bundle on $X$ with $\mathscr{I}(h)=\mathcal{O}_{X}$. Assume that $\mathcal{E}=f_{\ast}(K_{X/Y}\otimes L)$ is locally free. Then it is pseudo-effective.
\end{theorem}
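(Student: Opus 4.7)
My plan is to exhibit a Griffiths semi-positive singular Hermitian metric on $\mathcal{E}$; by the implication ``positively curved $\Rightarrow$ pseudo-effective'' recorded in the remark following Definition~1, this finishes the proof. The candidate metric $H$ is the standard fibre-integral ($L^{2}$) metric: on the Zariski open set $Y_{0} \subset Y$ over which $f$ is smooth, and for a local section $u \in \Gamma(V, \mathcal{E})$ with $V \subset Y_{0}$ sufficiently small, set
\[
|u|_{H}^{2}(y) \;=\; \int_{X_{y}} c_{n}\, u_{y} \wedge \overline{u_{y}}\, e^{-\varphi}, \qquad y \in V,
\]
where $\varphi$ is a local weight of $h$ and $u_{y} \in H^{0}(X_{y}, K_{X_{y}} \otimes L|_{X_{y}})$ is the restriction via the canonical isomorphism $\mathcal{E}_{y} \cong H^{0}(X_{y}, K_{X_{y}} \otimes L|_{X_{y}})$. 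The hypothesis $\mathscr{I}(h) = \mathcal{O}_{X}$ ensures that $e^{-\varphi}$ is locally integrable along every fibre over $Y_{0}$, so $H$ is a bona-fide (finite-valued on holomorphic sections) singular Hermitian metric on $\mathcal{E}|_{Y_{0}}$.

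The core of the proof is verifying Griffiths semi-positivity of $H$, which I would do along the lines of Berndtsson--Paun--Takayama using the Ohsawa--Takegoshi extension theorem. Fix $y_{0} \in Y_{0}$ and $u_{0} \in \mathcal{E}_{y_{0}}$. Applying Ohsawa--Takegoshi to the submanifold $X_{y_{0}} \subset f^{-1}(V)$ with twisting weight $\varphi$ produces a holomorphic extension $\tilde{u} \in \Gamma(V, \mathcal{E})$ of $u_{0}$ satisfying the $L^{2}$-estimate
\[
\int_{V} |\tilde{u}|_{H}^{2}\, dV_{Y} \;\le\; C \cdot |u_{0}|_{H}^{2}(y_{0}),
\]
with a constant $C$ depending only on $V$; the assumption $\mathscr{I}(h) = \mathcal{O}_{X}$ is used precisely so that the right-hand side (the fibre $L^{2}$ norm of $u_{0}$) is finite, which is the integrability condition needed by OT. This \emph{minimum extension property} is equivalent, via duality and a standard sub-mean value argument as in \cite{Ber08,PT14}, to plurisubharmonicity of $\log |\xi|_{H^{*}}^{2}$ for every local section $\xi$ of $\mathcal{E}^{*}$, i.e.\ to Griffiths semi-positivity of $H$ on the regular locus $Y_{0}$.

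It remains to extend the metric across the proper analytic subset $Y \setminus Y_{0}$. The plurisubharmonic weights $\log |\xi|_{H^{*}}^{2}$ are locally bounded above near $Y \setminus Y_{0}$ by comparison with a fixed smooth reference metric on $\mathcal{E}^{*}$ (twisted by a sufficiently ample line bundle if necessary), so they extend across $Y \setminus Y_{0}$ as plurisubharmonic functions. This upgrades $H^{*}$ to a Griffiths semi-negative singular metric on $\mathcal{E}^{*}$ over all of $Y$, hence $H$ to a Griffiths semi-positive metric on $\mathcal{E}$, from which pseudo-effectivity follows. The main obstacle is the Ohsawa--Takegoshi step itself: one must invoke a version of the extension theorem valid for the singular weight $\varphi$ (e.g.\ Demailly's version) and carry out careful bookkeeping of the constants to secure a uniform minimum extension estimate on $V$; once this is in place, the remaining arguments are essentially formal.
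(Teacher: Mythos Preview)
Your argument is correct in outline but follows a genuinely different route from the paper's. You construct the fibrewise $L^{2}$ (Narasimhan--Simha) metric on $\mathcal{E}$ and verify Griffiths semi-positivity via the Ohsawa--Takegoshi minimum-extension property, exactly in the spirit of \cite{Ber08,PT14}; the paper instead proves pseudo-effectivity through its equivalence with Viehweg weak positivity, by showing that $\mathcal{E}^{\otimes m}\otimes A$ is generically globally generated for a fixed ample twist $A$ and every $m\geqslant 1$. Concretely, the paper passes to the $m$-fold fibre product $f_{m}:X_{m}\to Y$, uses Lemma~\ref{l3} to identify $\mathcal{E}^{\otimes m}$ with $(f_{m})_{\ast}(K_{X_{m}/Y}\otimes L_{m})$, and then applies the \emph{global} Ohsawa--Takegoshi theorem (Theorem~\ref{t9}) by induction on $\dim Y$, extending sections from a general divisor $D\in|f_{m}^{\ast}H|$ rather than from a single fibre. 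Your approach yields strictly more---an actual positively curved singular metric on $\mathcal{E}$---while the paper's approach parallels the fibre-product machinery used for Theorem~\ref{t2} and makes the comparison with that result transparent; it also sidesteps the metric-extension argument across $Y\setminus Y_{0}$.

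One small correction: the hypothesis $\mathscr{I}(h)=\mathcal{O}_{X}$ does \emph{not} guarantee that $e^{-\varphi}$ is integrable along \emph{every} smooth fibre, only along the generic one (Fubini gives this outside a measure-zero set, and in fact outside a proper Zariski-closed subset of $Y$, as the paper itself notes when it sets $Y'=Z\cap Z'$). This does not damage your strategy---just shrink $Y_{0}$ to exclude those bad fibres and absorb them into the final extension step---but the claim as you stated it is too strong.
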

In fact, $\mathcal{E}$ is proved to be positively curved even without the assumption that $\mathscr{I}(h)=\mathcal{O}_{X}$ in \cite{Ber08}. There is no obvious relationship between being positively curved and nef. But we know that when $\textrm{rank}\mathcal{E}=1$, being positively curved is equivalent to being pseudo-effective, which is weaker than being nef. Therefore we believe that there is something new in our result.

Technically, the proof of Theorem \ref{t7} follows the same way as in the proof of Theorem \ref{t2}, i.e. to prove that $f_{\ast}(K_{X/Y}\otimes L)^{\otimes m}\otimes A$ is generically globally generated for an auxiliary line bundle $A$ and all $m>0$. The difference is that we will use the Ohsawa--Takegoshi theorem instead of Theorem \ref{t1} to achieve it.
\begin{proof}[Proof of Theorem \ref{t7}]
For a vector bundle, being pseudo-effective is equivalent to being weakly positive in the sense of Viehweg. So it is sufficient to find a Zariski open subset $Y^{\prime}\subset Y$ such that
\begin{equation*}
  \mathcal{E}^{\otimes m}\otimes A
\end{equation*}
is globally generated on $Y^{\prime}$ for a fixed line bundle $A$ and all $m>0$. Since $\mathscr{I}(h)=\mathcal{O}_{X}$, there is a Zariski open subset $Z^{\prime}\subset Y$ such that $\mathscr{I}(h|_{X_{y}})=\mathcal{O}_{X_{y}}$ for all $y\in Z^{\prime}$ by Fubini's Theorem. Let $Z\subset Y$ be the set of all regular value of $f$. Then we take $Y^{\prime}=Z\cap Z^{\prime}$ and $A=K_{Y}\otimes H^{\otimes(\dim Y+1)}$, where $H$ is a very ample line bundle over $Y$. Following the same idea of Theorem \ref{t2}, we consider the $m$-fold fibre product $f_{m}:X_{m}=X\times_{Y}\cdot\cdot\cdot\times_{Y}X\rightarrow Y$. If we denote
\begin{equation*}
    L_{m}:=(p^{m}_{1}\otimes p^{m-1}_{1}p^{m}_{2}\otimes p^{m-2}_{1}p^{m-1}_{2}p^{m}_{2}\otimes\cdot\cdot\cdot\otimes p^{1}_{2}p^{2}_{2}...p^{m}_{2})^{\ast}(L),
\end{equation*}
by Lemma \ref{l3} we have
\begin{equation*}
    \mathcal{E}^{\otimes m}\otimes A=(f_{m})_{\ast}(K_{X_{m}/Y}\otimes L_{m})\otimes A.
\end{equation*}
So we only need to prove the global generation of $(f_{m})_{\ast}(K_{X_{m}/Y}\otimes L_{m})\otimes A$ on $Y^{\prime}$.

Note that the bundle $L_{m}$ with the metric
\begin{equation*}
    h_{m}=(p^{m}_{1}\otimes p^{m-1}_{1}p^{m}_{2}\otimes p^{m-2}_{1}p^{m-1}_{2}p^{m}_{2}\otimes\cdot\cdot\cdot\otimes p^{1}_{2}p^{2}_{2}...p^{m}_{2})^{\ast}(h)
\end{equation*}
is pseudo-effective. Moreover, because the fibre $(X_{m})_{y}$ is isomorphic to the $m$-fold product $X_{y}\times\cdot\cdot\cdot\times X_{y}$ for $y\in Y^{\prime}$, we have
\begin{equation*}
   \mathscr{I}(h_{m}|_{(X_{m})_{y}})=(p^{m}_{1}\otimes p^{m-1}_{1}p^{m}_{2}\otimes p^{m-2}_{1}p^{m-1}_{2}p^{m}_{2}\otimes\cdot\cdot\cdot\otimes p^{1}_{2}p^{2}_{2}...p^{m}_{2})^{\ast}\mathscr{I}(h|_{X_{y}})
\end{equation*}
by the subadditivity theorem \cite{DEL00}. Since $\mathscr{I}(h|_{X_{y}})=\mathcal{O}_{X_{y}}$, combining with the Ohsawa--Takegoshi theorem we have that
\begin{equation*}
   \mathcal{O}_{(X_{m})_{y}}=\mathscr{I}(h_{m}|_{(X_{m})_{y}})\subset\mathscr{I}(h_{m})|_{(X_{m})_{y}}.
\end{equation*}
Here we use the local version of the Ohsawa--Takegoshi theorem.
\begin{theorem}(local version)\label{t8}
Let $f:X\rightarrow\Delta$ be a smooth fibration between projective manifold $X$ and the disc $\Delta$. $(L,h)$ is a (singular) Hermitian line bundle with semi-positive curvature current $i\Theta_{L,h}$ on $X$. Let $\omega$ be a global K\"{a}hler metric on $X$, and $dV_{X},dV_{X_{0}}$ the respective induced volume forms on $X$ and $X_{0}$. Here $X_{0}$ is the central fibre. Assume that $h|_{X_{0}}$ is well-defined. Then any holomorphic section $u$ of $K_{X_{0}}\otimes L|_{X_{0}}\otimes\mathscr{I}(\varphi|_{X_{0}})$ extends into a section $\tilde{u}$ over $X$ satisfying an $L^{2}$-estimate
\begin{equation*}
   \int_{X}|\tilde{u}|^{2}_{\omega,h}dV_{\omega}\leqslant C_{0}\int_{X_{0}}|u|^{2}_{\omega,h}dV_{X_{0}},
\end{equation*}
where $C_{0}$ is some universal constant.
\end{theorem}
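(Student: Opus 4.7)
The plan is to follow the classical Ohsawa--Takegoshi--Manivel strategy in its Berndtsson/Demailly form: produce a rough smooth extension of $u$ to a neighborhood of $X_0$ via a cutoff, then correct it to a holomorphic section by solving a $\bar\partial$-equation with a twisted $L^2$-estimate sharp enough to (i) force the correction to vanish along $X_0$, so that the boundary value of the resulting section is exactly $u$, and (ii) deliver the stated inequality with a universal constant.

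First I would fix a coordinate $t$ on $\Delta$ with $X_0=\{t=0\}$. Using that $f$ is a smooth fibration, locally $X\cong X_0\times\Delta$, so $u$ can be pulled back along the second projection to a smooth section $u_0$ defined in a tube $\{|t|<\delta\}$, and a partition of unity glues these to a global smooth extension. Multiplication by a cutoff $\chi_\varepsilon(|t|^2)$, equal to $1$ for $|t|\le\varepsilon$ and supported in $|t|\le 2\varepsilon$, gives a compactly concentrated almost-extension $\chi_\varepsilon u_0$ whose $\bar\partial$ is supported in the annulus $\varepsilon\le|t|\le 2\varepsilon$ and scales like $\varepsilon^{-1}$ in norm.

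Next I would solve $\bar\partial v_\varepsilon=\bar\partial(\chi_\varepsilon u_0)$ with an $L^2$-estimate of Donnelly--Fefferman/Berndtsson type, using auxiliary weights depending only on $s:=-\log|t|^2$. Concretely, with $\eta(s)=s+\log s$ and $\lambda(s)=(1+s)^2/s$ (the standard Ohsawa--Takegoshi choice), the twisted Bochner--Kodaira--Nakano identity applied to the metric $he^{-\eta}$ on $K_X\otimes L$ produces a solution satisfying
\begin{equation*}
    \int_{X}|v_\varepsilon|^{2}_{\omega,h}\,e^{-\eta}\,dV_{\omega}
    \le \int_{X}\lambda\,|\bar\partial(\chi_\varepsilon u_0)|^{2}_{\omega,h}\,e^{-\eta}\,dV_{\omega}.
\end{equation*}
On the support of $\bar\partial\chi_\varepsilon$ one has $|\bar\partial\chi_\varepsilon|^2\lesssim\varepsilon^{-2}$ while $\lambda\,e^{-\eta}\lesssim \varepsilon^{2}$, so the right-hand side collapses (by Fubini over the annulus in $t$) to $C_0\int_{X_0}|u|^{2}_{\omega,h}\,dV_{X_0}$ up to an absolute constant. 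Setting $\tilde u_\varepsilon:=\chi_\varepsilon u_0-v_\varepsilon$, holomorphicity of $\tilde u_\varepsilon$ together with the finiteness of $\int|v_\varepsilon|^2 e^{-\eta}\,dV$ (which makes $v_\varepsilon/t$ locally $L^2$) forces $v_\varepsilon|_{X_0}\equiv 0$, whence $\tilde u_\varepsilon|_{X_0}=u$. A normal-families argument extracts a weak limit $\tilde u$ as $\varepsilon\to 0$, giving the desired global holomorphic extension with $\int_X|\tilde u|^2_{\omega,h}\,dV_\omega\le C_0\int_{X_0}|u|^2_{\omega,h}\,dV_{X_0}$.

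The main obstacle is the positivity of the twisted curvature operator in the presence of the singular weight $\varphi$ of $h$: one needs $i\Theta_{L,h}+i\partial\bar\partial\eta - \lambda^{-1} i\partial\eta\wedge\bar\partial\eta\ge 0$ in the sense of currents, and this must be arranged simultaneously with the compatibility of $\mathscr{I}(\varphi|_{X_0})$ with the extension problem. The standard remedy, which I would adopt, is to regularize $\varphi$ by Demailly's approximation to a decreasing sequence of smooth weights $\varphi_\nu\searrow\varphi$ whose curvature tends to $i\Theta_{L,h}$ with a negligible loss; solve the extension problem for each $\varphi_\nu$ using the smooth twisted estimate above; and pass to the limit, the hypothesis $u\in H^{0}(X_0,K_{X_0}\otimes L|_{X_0}\otimes\mathscr{I}(\varphi|_{X_0}))$ together with $h|_{X_0}$ being well-defined guaranteeing that the right-hand side of the estimate stays finite and uniformly bounded through the approximation.
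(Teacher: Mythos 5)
The paper does not actually prove Theorem \ref{t8}: it is quoted inside the proof of Theorem \ref{t7} as the classical local Ohsawa--Takegoshi extension theorem, with no argument supplied. So there is no in-paper proof to compare against, and what you have written is an attempt to reprove the classical result. Your overall strategy --- rough extension by a cutoff $\chi_\varepsilon$, correction by solving $\bar\partial v_\varepsilon=\bar\partial(\chi_\varepsilon u_0)$ with a twisted estimate, vanishing of $v_\varepsilon$ along $X_0$, and Demailly regularization to handle the singular weight --- is indeed the standard and correct route.

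However, as written the key step fails because of a sign inconsistency in the auxiliary weights. With $s=-\log|t|^2$ and $\eta(s)=s+\log s$ you have $e^{-\eta}=|t|^2/(-\log|t|^2)$, which tends to $0$ along $X_0$. Hence finiteness of $\int_X|v_\varepsilon|^2e^{-\eta}\,dV$ is \emph{weaker} than plain $L^2$, does not make $v_\varepsilon/t$ locally $L^2$, and does not force $v_\varepsilon|_{X_0}\equiv0$; for that you need the weight on the solution to blow up at least like $|t|^{-2}$ (non-integrability of $|t|^{-2}$ in $\mathbb{C}$ is exactly what kills a nonvanishing holomorphic $v_\varepsilon$ on $X_0$). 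The same slip corrupts your right-hand side bookkeeping: with $\lambda e^{-\eta}\approx|t|^2$ and $|\bar\partial\chi_\varepsilon|^2\approx\varepsilon^{-2}$, integrating over the annulus of area $\approx\varepsilon^2$ gives $O(\varepsilon^2)\int_{X_0}|u|^2$, which tends to $0$ rather than to the bounded quantity $C_0\int_{X_0}|u|^2_{\omega,h}dV_{X_0}$ --- a sign that the balance of weights is off by a factor of $|t|^2$ throughout. The fix is the standard one (solve with respect to $\varphi+\log|t|^2$ plus a logarithmic correction, so the solution estimate carries $e^{-\varphi}/(|t|^2(\log|t|^2)^2)$), but it must be put in place for the argument to close. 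Two further points you should address rather than wave at: (i) to solve $\bar\partial$ with $L^2$ estimates you cannot work directly on the compact projective manifold $X$; one passes to the Stein complement of a hypersurface (or uses a complete K\"ahler metric) and then extends across using the uniform $L^2$ bound; (ii) Demailly's regularization $\varphi_\nu\searrow\varphi$ on a compact manifold only gives $i\Theta_{\varphi_\nu}\geqslant-\epsilon_\nu\omega$, and this curvature loss must be absorbed by the twist, which is a genuinely delicate point in the singular-metric version of Ohsawa--Takegoshi, not an automatic limit.
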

Therefore $\mathscr{I}(h_{m})|_{(X_{m})_{y}}=\mathcal{O}_{(X_{m})_{y}}$ for all $y\in Y^{\prime}$.

On the other hand, we have
\begin{equation*}
   H^{0}(Y,(f_{m})_{\ast}(K_{X_{m}/Y}\otimes L_{m})\otimes A)=H^{0}(X_{m},K_{X_{m}/Y}\otimes L_{m}\otimes f^{\ast}_{m}A)
\end{equation*}
and the fibre of $(f_{m})_{\ast}(K_{X_{m}/Y}\otimes L_{m})\otimes A$ equals to (remember that $A_{y}$ is just $\mathbb{C}$)
\begin{equation*}
  H^{0}((X_{m})_{y}, K_{(X_{m})_{y}}\otimes L_{m}|_{(X_{m})_{y}}),
\end{equation*}
we only need to show that the restriction morphism
\begin{equation*}
H^{0}(X_{m},K_{X_{m}}\otimes L_{m}\otimes f^{\ast}_{m}H^{\otimes(\dim Y+1)})\rightarrow H^{0}((X_{m})_{y}, K_{(X_{m})_{y}}\otimes L_{m}|_{(X_{m})_{y}})
\end{equation*}
is surjective (remember that we set $A=K_{Y}\otimes H^{\otimes(\dim Y+1)}$ at the beginning). We do the induction on the dimension of $Y$.

If $\dim Y=1$, a general element $D$ of the linear system $|f^{\ast}_{m}H|$ is a disjoint union of smooth fibres. Hence if $(X_{m})_{y}=f^{-1}_{m}(y)$ for some $y\in f(D)$, we have the surjective restriction morphism
\begin{equation*}
   H^{0}(D,K_{D}\otimes L_{m}|_{D})\rightarrow H^{0}((X_{m})_{y}, K_{(X_{m})_{y}}\otimes L_{m}|_{(X_{m})_{y}}).
\end{equation*}
Fix a smooth metric $h^{\prime}$ with positive curvature on $H$. Since $\Theta_{h_{m}}\geqslant0$, we have
\begin{equation*}
   \Theta_{h_{m}\otimes f^{\ast}_{m}(h^{\prime})^{\otimes2}}(L_{m}\otimes f^{\ast}_{m}H^{\otimes2})\geqslant\Theta_{f^{\ast}_{m}(h^{\prime})^{\otimes2}}(f^{\ast}_{m}H^{\otimes2})\geqslant\Theta_{f^{\ast}_{m}(h^{\prime})}(f^{\ast}_{m}H).
\end{equation*}
Thus the line bundle $L_{m}\otimes f^{\ast}_{m}H^{\otimes2}$ endowed with the metric $h_{m}\otimes f^{\ast}_{m}(h^{\prime})^{\otimes2}$ satisfies the conditions of the following Ohsawa--Takegoshi theorem:
\begin{theorem}(global version)\label{t9}
Let $X$ be an $n$-dimensional projective or Stein manifold, $Z\subset X$ is the zero locus of some section $s\in H^{0}(X,E)$ for a holomorphic vector bundle $E\rightarrow X$; suppose that $Z$ is smooth and the codimension $r=\mathrm{rank}(E)$. Let $(F,h)$ be a line bundle, endowed with (singular) Hermitian metric $h$ with

(1) $\Theta_{h}(F)+i\partial\bar{\partial}\log|s|^{2}\geqslant0$;

(2) $\Theta_{h}(F)+i\partial\bar{\partial}\log|s|^{2}\geqslant(<\Theta(E)s,s>)/(\alpha|s|^{2})$ for some $\alpha\geqslant1$;

(3) $|s|^{2}\leqslant\exp(-\alpha)$, and $h|_{Z}$ is well-defined.

Then all the sections $u\in H^{0}(Z,(K_{X}+F|_{Z})\otimes\mathscr{I}(h|_{Z}))$ extend to $\tilde{u}$ on the whole $X$, and satisfy that
\begin{equation*}
   \int_{X}\frac{\tilde{u}\wedge\bar{\tilde{u}}\exp(-\varphi_{F})}{|s|^{2r}(-\log|s|^{2})}\leqslant C_{\alpha}\int_{Z}\frac{|u|^{2}\exp(-\varphi_{F})dV_{Z}}{|\bigwedge^{r}(ds)|^{2}}.
\end{equation*}
Here $h=e^{-\varphi_{F}}$.
\end{theorem}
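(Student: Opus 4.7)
The plan is to prove this via the twisted Bochner--Kodaira--Nakano method of Ohsawa--Takegoshi--Manivel--Demailly: I build $\tilde{u}$ as a cutoff of a trivial extension of $u$, corrected by a $\bar{\partial}$-solution with a carefully weighted $L^{2}$-estimate. I would first reduce to $X$ Stein by exhausting projective $X$ with complements of ample divisors, and reduce to smooth $h$ with analytic singularities via Demailly's regularization $h_{\nu}\searrow h$ with $\Theta_{h_{\nu}}(F)\geqslant -\varepsilon_{\nu}\omega$, recovering the singular case by monotone convergence while preserving conditions (1)--(3) up to errors $\varepsilon_{\nu}\to 0$. The codimension $r\geqslant 1$ statement reduces to $r=1$ either by induction along components of $s$ or via the standard Koszul identification relating $|\bigwedge^{r}ds|^{2}$ to $dV_{Z}$.

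For the hypersurface case, choose a smooth cutoff $\chi:\mathbb{R}\to[0,1]$ with $\chi(t)=1$ for $t\leqslant 1/2$ and $\chi(t)=0$ for $t\geqslant 1$, and a smooth local extension $\widetilde{u}$ of $u$ in a tubular neighborhood of $Z$, globalized by a partition of unity. Set
\begin{equation*}
u_{\varepsilon}:=\chi(|s|^{2}/\varepsilon)\widetilde{u},\qquad g_{\varepsilon}:=\bar{\partial}u_{\varepsilon},
\end{equation*}
so that $g_{\varepsilon}$ is supported in $\{\varepsilon/2<|s|^{2}<\varepsilon\}$ and vanishes identically on $Z$. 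It suffices to solve $\bar{\partial}w_{\varepsilon}=g_{\varepsilon}$ with an $L^{2}$-bound strong enough that $w_{\varepsilon}$ vanishes on $Z$ in the limit $\varepsilon\to 0$; then $\tilde{u}:=\widetilde{u}-w$ is a holomorphic extension.

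The core is a twisted Bochner--Kodaira--Nakano inequality on $F$-valued $(n,1)$-forms with auxiliary weight $\eta=\log(-\log(|s|^{2}+\varepsilon^{2}))$ and scalar $\lambda>0$ chosen so that $\Theta_{h}(F)+i\partial\bar{\partial}\eta-\lambda^{-1}i\,\partial\eta\wedge\bar{\partial}\eta$ is semi-positive. Hypothesis (1) controls the background term; hypothesis (2) with its $\alpha$-slack absorbs the $\langle\Theta(E)s,s\rangle/|s|^{2}$ term generated by $i\partial\bar{\partial}\log(|s|^{2}+\varepsilon^{2})$; hypothesis (3) ensures $-\log(|s|^{2}+\varepsilon^{2})\geqslant \alpha>0$, so that $\eta$ is well-defined and $\lambda$ is bounded. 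H\"{o}rmander's theorem then produces $w_{\varepsilon}$ with
\begin{equation*}
\int_{X}\frac{|w_{\varepsilon}|^{2}e^{-\varphi_{F}}}{(|s|^{2}+\varepsilon^{2})^{r}(-\log(|s|^{2}+\varepsilon^{2}))}\leqslant C_{\alpha}\int_{Z}\frac{|u|^{2}e^{-\varphi_{F}}dV_{Z}}{|\bigwedge^{r}ds|^{2}}.
\end{equation*}
Extracting a weak $L^{2}$-limit as $\varepsilon\to 0$ and undoing the regularization of $h$ by monotone convergence yields the claim: finiteness of the weighted integral with the $|s|^{-2r}$ pole forces $w|_{Z}=0$, so $\tilde{u}$ restricts to $u$, and the stated $L^{2}$-bound is inherited in the limit.

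The main obstacle is the precise engineering of the twisting pair $(\eta,\lambda)$. It must simultaneously produce the sharp weight $|s|^{-2r}(-\log|s|^{2})^{-1}$ on the left, allow hypothesis (2) to absorb the second-fundamental-form contribution from the curvature of $E$, and keep the corrected curvature semi-positive despite the singular behaviour of $\log(-\log|s|^{2})$ along $Z$. The logarithmic loss $(-\log|s|^{2})^{-1}$ in the weight is exactly the price of this balance, and the normalization $|s|^{2}\leqslant e^{-\alpha}$ in (3) is what renders the auxiliary functions admissible; the propagation of the universal constant $C_{\alpha}$ through the Cauchy--Schwarz splitting, together with the codimension-$r$ unwinding, is an additional technical subtlety.
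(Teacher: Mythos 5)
The paper does not prove this statement at all: Theorem \ref{t9} is quoted inside the proof of Theorem \ref{t7} as a known black box --- it is verbatim the Ohsawa--Takegoshi--Manivel extension theorem in Demailly's formulation --- so there is no ``paper's own proof'' to compare against. Judged on its own terms, your outline correctly identifies the standard strategy (cutoff of a local extension, twisted Bochner--Kodaira--Nakano inequality with an auxiliary weight built from $\log(|s|^{2}+\varepsilon^{2})$, H\"ormander-type $\bar{\partial}$-solution, limit $\varepsilon\to0$, and the observation that integrability against $|s|^{-2r}$ forces the correction term to vanish on $Z$). But as written it is a plan rather than a proof: the entire content of the theorem sits in the ``precise engineering of the twisting pair $(\eta,\lambda)$'' that you explicitly defer. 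That computation is the only place where hypotheses (2) and (3) are actually consumed, where the logarithmic factor $(-\log|s|^{2})^{-1}$ and the constant $C_{\alpha}$ are produced, and where one must check that the error terms coming from $\bar{\partial}(\chi(|s|^{2}/\varepsilon))$ stay bounded as $\varepsilon\to0$. Without it the argument cannot be certified.

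Two specific points would fail or need repair if carried out as stated. First, the proposed reduction of the codimension-$r$ case to $r=1$ ``by induction along components of $s$'' does not work in general: the zero loci of partial collections of components of $s$ need not be smooth, the intermediate restrictions of $h$ need not be well defined, and hypotheses (1)--(3) are formulated for the full bundle $E$ and do not descend to sub-collections. The genuine higher-codimension argument works directly with $\sigma=\log(|s|^{2}+\varepsilon^{2})$ and the weight $r\sigma$ (this is also exactly how the factor $|\bigwedge^{r}(ds)|^{-2}$ arises on the right-hand side), not by induction. Second, the reduction from projective to Stein by deleting an ample divisor requires a word on why the extension and the $L^{2}$ bound survive when the divisor is added back; this follows from the uniformity of the estimate but is not automatic and should be argued.
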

Therefore any section in
\begin{equation*}
    H^{0}(D,K_{D}\otimes L_{m}|_{D}\otimes\mathscr{I}(h_{m}|_{D}))
\end{equation*}
extends to a section in $H^{0}(X_{m},K_{X_{m}}\otimes L_{m}\otimes\mathscr{I}(h_{m}))$. Since for any $y\in Y^{\prime}$, $\mathscr{I}(h_{m})|_{(X_{m})_{y}}=\mathcal{O}_{(X_{m})_{y}}$, we have $\mathscr{I}(h_{m}|_{D})=\mathcal{O}_{D}$, and the proof of the $\dim Y=1$ part is finished.

If $\dim Y>1$, a general element $D$ of the linear system $|f_{m}^{\ast}H|$ is a projective submanifold of $X$ by Bertini's theorem. Furthermore $h_{m}|_{D}$ is well-defined. The former computation shows that $(L_{m}|_{D},h_{m}|_{D})$ is a pseudo-effective line bundle over $D$ such that $\Theta_{h_{m}|_{D}}(L_{m}|_{D})\geqslant0$ and $\mathscr{I}(h_{m}|_{D})_{(X_{m})_{y}}=\mathcal{O}_{(X_{m})_{y}}$ for every $y\in D$. So by induction
\begin{equation*}
    (f_{m}|_{D})_{\ast}(K_{D}\otimes L_{m}|_{D})\otimes H^{\otimes\dim Y}|_{D}
\end{equation*}
is generically generated by its global sections. By adjunction formula, $K_{D}=K_{X_{m}}\otimes f^{\ast}_{m}H\otimes\mathcal{O}_{D}$, so another application of Theorem \ref{t9} shows that the restriction map
\begin{equation*}
   \begin{split}
       &H^{0}(X_{m},K_{X_{m}}\otimes L_{m}\otimes f^{\ast}_{m}H^{\otimes(\dim Y+1)}\otimes\mathscr{I}(h_{m}))\rightarrow\\
       &H^{0}(D,K_{D}\otimes L_{m}|_{D}\otimes f^{\ast}_{m}H^{\otimes\dim Y}|_{D}\otimes\mathscr{I}(h_{m}|_{D}))
   \end{split}
\end{equation*}
is surjective. The proof is complete.
\end{proof}

As we see in the proof, the ingredient is to use the Ohsawa--Takegoshi theorem to extend the section of $H^{0}(D,K_{D}\otimes L_{m}|_{D}\otimes f^{\ast}_{m}H^{\otimes\dim Y}|_{D}\otimes\mathscr{I}(h_{m}|_{D}))$ on a general divisor $D$, which requires that $h_{m}|_{D}$ as well as $h|_{f^{-1}(f_{m}(D))}$ is well-defined. This requirement is weakened to be an integral condition in our main result.

\begin{acknowledgment}
This paper was finished during the visit to Chalmers, so the author thanks Prof. Bo Berndtsson for many valuable discussions. Also the author wants to express his gratitude to his domestic supervisor Prof. Jixiang Fu for the support to this visit. Thanks also go to Ya Deng and Jian Xiao, who made a detailed introduction to the recent work of P\u{a}un--Takayama on the direct images.
\end{acknowledgment}

\address{

\small Current address: School of Mathematical Sciences, Fudan University, Shanghai 200433, People's Republic of China.

\small E-mail address: jingcaowu08@gmail.com, jingcaowu13@fudan.edu.cn
}

\end{document}